\documentclass{amsart}
\usepackage{amsmath,amssymb,amsthm,verbatim}
\newtheorem{theorem}{Theorem}
\newtheorem{corollary}{Corollary}
\newtheorem{proposition}{Proposition}
\newtheorem{lemma}{Lemma}
\setlength{\parskip}{0.1in}
\setlength{\parindent}{0cm}
\begin{document}
\title[Generators for Decompositions of Tensor Products of Modules]{Generators for Decompositions of Tensor Products of Modules associated with standard Jordan partitions}
\author{Michael~J.~J.~Barry}
\address{Department of Mathematics\\
Allegheny College\\
Meadville, PA 16335}
\email{mbarry@allegheny.edu}
\thanks{}

\subjclass{Primary 20C20}
\keywords{Tensor product, indecomposable module, cyclic group, module generator}

\begin{abstract}
If $K$ is a field of finite characteristic $p$, $G$ is a cyclic group of order $q=p^\alpha$, $U$ and $W$ are indecomposable $KG$-modules with $\dim U=m$ and $\dim W=n$, and $\lambda(m,n,p)$ is a standard Jordan partition of $ m n$, we describe how to find a generator for each of the indecomposable components of the $KG$-module $U \otimes W$.
\end{abstract}
\maketitle
\section{Introduction}
Let $p$ be a prime number, $K$ a field of characteristic $p$, and $G$ a cyclic group of order $q=p^\alpha$, where $\alpha$ is a positive integer.  It is well-known that there are exactly $q$ isomorphism classes of indecomposable $KG$-modules and that such modules are cyclic and uniserial~\cite[p. 24--25]{A1986}.  Let $\{V_1,\dots,V_q\}$ be a set of representatives of these isomorphism classes with $\dim V_i=i$.  Many authors have investigated the decomposition of the $K G$-module $V_m \otimes V_n$, where $m \leq n$, into a direct sum of indecomposable $K G$-modules---for example, in order of publication, see \cite{G1962}, \cite{S1964}, \cite{L1974},  \cite{N1995}, \cite{R1979}, \cite{H2003},  \cite{N2008}, and \cite{B2011_0}. From the works of these authors, it is well-known that $V_m \otimes V_n$ decomposes into a direct sum $V_{\lambda_1} \oplus V_{\lambda_2} \dots \oplus V_{\lambda_m}$ of $m$ indecomposable $KG$-modules where $\lambda_1 \geq \lambda_2 \geq \dots \geq \lambda_m>0$, but that the dimensions $\lambda_i$ of the components depend on the characteristic $p$.  Now $\lambda(m,n,p)=(\lambda_1, \lambda_2,\dots,\lambda_m)$ is called a \textbf{Jordan partition} of $m n$, and $\lambda(m,n,p)$ is said to be \textbf{standard} exactly when $\lambda_i=m+n+1-2 i$ for every integer  $i \in [1,m]$.  So when $\lambda(m,n,p)$ is standard,
\[V_m \otimes V_n \cong \bigoplus_{i=1}^m V_{n+m+1-2 i}.\]
Necessary and sufficient conditions on $m$, $n$, and $p$ for $\lambda(m,n,p)$ to be standard were given in~\cite{B2015}.

Fix a generator $g$ of $G$.    There is a basis $u_1$, $u_2$, \dots, $u_m$ of $V_m$ on which the action of $g$ is given by $g u_1=u_1$ and $g u_i=u_{i-1}+u_i$ when $i>1$.  Note that $(g-1)^i u_m=u_{m-i}$, and so $u_m$ generates $V_m$ as a $K G$-module.  Similarly there is a  basis $w_1$, $w_2$, \dots, $w_n$ of $V_n$, with $V_n$ generated as a $K G$-module by $w_n$, on which the action of $g$ is given by $g w_1=w_1$ and $g w_i=w_{i-1}+w_i$ when $i>1$.  Clearly $\{v_{i,j}=u_i \otimes w_j \mid 1 \leq i \leq m, 1 \leq j \leq n\}$ is a basis of $V_m \otimes V_n$ over $K$.  Shortly we will see that $\mathcal{B}=\{f_{i,j}=u_i \otimes g^{n-i} w_j\mid 1 \leq i \leq m, 1 \leq j \leq n\}$ is also a basis of $V_m \otimes V_n$ that turns out to be easier to work with.  We will specify, in terms of $\mathcal{B}$, $m$ elements $y_1$, $y_2$, \dots, $y_m$ in $V_m \otimes V_n$ such that, when $\lambda(m,n,p)$ is standard, $K G y_i \cong V_{n+m+1-2 i}$ and $V_m \otimes V_n$ is an internal direct sum of the indecomposable modules $K G y_i$. ($m \geq 2$)

Barry did this for the special standard partition $\lambda(m,n,p)$ with $m+n \leq p+1$ in ~\cite{B2011}, and  Glasby, Praeger, and Xia did it for a subset of standard partitions that properly includes the case that Barry dealt with in~\cite{GPX}.

We now describe the organization of the paper.  In Section~\ref{Prelim}, we show how to calculate in $V_m \otimes V_n$, state our results in Section~\ref{Results}, work out an example in Section~\ref{Example}, dealt with characteristic $2$ in Section~\ref{Char2}, and then the rest of the paper deals with the odd characteristic case.

\section{Preliminaries}\label{Prelim}
\begin{lemma}\label{f_{i,j}}
The set $\mathcal{B}=\{f_{i,j}=u_i \otimes g^{n-i} w_j \mid 1 \leq i\leq m, 1 \leq j \leq n\}$ is an $K$-basis for $V_m \otimes V_n$, and $(g-1) f_{i,j}=f_{i-1,j}+f_{i,j-1}$, where we understand that $f_{k,\ell}=0$ if $k<1$ or $\ell<1$.
\end{lemma}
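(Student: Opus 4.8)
The plan is to establish the two assertions separately: first that $\mathcal{B}$ is a $K$-basis, and then the recursion for $(g-1)f_{i,j}$. The guiding observation is that $f_{i,j}$ differs from the naive basis vector $v_{i,j}=u_i \otimes w_j$ only by applying the automorphism $g^{n-i}$ to the second tensor factor, and that this particular twist is engineered so that the action of $g-1$ becomes symmetric in the two indices.

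For the basis claim, I would note that since $g$ is invertible, $g^{n-i}$ is a $K$-linear automorphism of $V_n$ for each fixed $i$; hence $\{g^{n-i}w_1,\dots,g^{n-i}w_n\}$ is again a $K$-basis of $V_n$. Decomposing the vector space $V_m \otimes V_n = \bigoplus_{i=1}^m (\langle u_i\rangle \otimes V_n)$, each summand $\langle u_i\rangle \otimes V_n$ is $n$-dimensional with basis $\{u_i \otimes g^{n-i}w_j \mid 1 \le j \le n\} = \{f_{i,j}\mid 1 \le j \le n\}$. Taking the union over $i$ shows $\mathcal{B}$ is a basis. Equivalently, one can observe that $g^{n-i}w_j = w_j + (\text{lower } w\text{-terms})$, so the transition matrix from $\{v_{i,j}\}$ to $\mathcal{B}$ is block upper-unitriangular, hence invertible.

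For the recursion, the key input is that $g$ acts diagonally on the tensor product, $g(x \otimes y) = gx \otimes gy$, together with the facts $(g-1)u_i = u_{i-1}$ and $(g-1)w_j = w_{j-1}$ (with the stated boundary conventions) and that $g$ commutes with $g-1$. Writing $w' = g^{n-i}w_j$ and expanding $(g-1)(u_i \otimes w') = gu_i \otimes gw' - u_i \otimes w'$ via $gu_i = u_i + u_{i-1}$, I would regroup the result as $u_i \otimes (g-1)w' + u_{i-1} \otimes gw'$. The first term is $u_i \otimes g^{n-i}w_{j-1} = f_{i,j-1}$, since $g-1$ commutes past $g^{n-i}$ and lowers the $w$-index.

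The main point—and the only place any care is required—is the second term: here the extra factor of $g$ raises the exponent to $g^{n-i+1} = g^{n-(i-1)}$, which is exactly the exponent prescribed for index $i-1$, so $u_{i-1} \otimes g^{n-(i-1)}w_j = f_{i-1,j}$. This exponent realignment is the whole reason the $g^{n-i}$ twist appears in the definition of $f_{i,j}$; without it the second term would carry a stray power of $g$ and the recursion would fail to close. Summing the two contributions gives $(g-1)f_{i,j} = f_{i-1,j} + f_{i,j-1}$, and the boundary conventions $f_{k,\ell}=0$ for $k<1$ or $\ell<1$ are forced by $u_0 = 0$ and $w_0 = 0$. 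I expect the bookkeeping of this twist, rather than any genuine difficulty, to be the crux.
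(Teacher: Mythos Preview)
Your proposal is correct and follows essentially the same route as the paper: a triangularity/invertibility argument for the basis claim (the paper writes $f_{i,j}=v_{i,j}+\sum_{k+\ell<i+j}\alpha_{k,\ell}v_{k,\ell}$, while you argue via the automorphism $g^{n-i}$ or, equivalently, block-unitriangularity), and a direct expansion of $g f_{i,j}$ for the recursion, with the same key observation that the extra factor of $g$ on the second tensorand shifts the exponent from $n-i$ to $n-(i-1)$. Your regrouping $(g-1)(u_i\otimes w')=u_i\otimes(g-1)w'+u_{i-1}\otimes gw'$ is a slightly cleaner bookkeeping of exactly the computation the paper carries out line by line.
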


\begin{proof}
Since $f_{i,j}=v_{i,j} +\sum_{k+\ell <i+j} \alpha_{k,\ell}v_{k,\ell}$, the linear independence of $\mathcal{B}$ follows from the linear independence of $\{v_{i,j} \mid 1 \leq i \leq m, 1 \leq j \leq n\}$. Also
\begin{align*}
g f_{i,j}&=g u_i \otimes g^{n-i+1} w_j\\
&=(u_i+u_{i-1}) \otimes g^{n-i}(w_j+w_{j-1})\\
&=u_i \otimes g^{n-i} w_j + u_i \otimes g^{n-i} w_{j-1} +u_{i-1} \otimes g^{n-i}(w_j+w_{j-1})\\
&=f_{i,j}+f_{i,j-1}+u_{i-1} \otimes g^{n-(i-1)} w_j\\
&=f_{i,j}+f_{i,j-1}+f_{i-1,j}
\end{align*}
Thus
$(g-1) f_{i,j}=f_{i,j-1}+f_{i-1,j}$.
\end{proof}

For an integer $k \in [1,m+n-1]$, define
\[ F_k=\langle f_{i,j} \mid i+j \leq k+1 \rangle, \text{ and } D_k=\langle f_{i,j} \mid i+j=k+1 \rangle.\]
Also for convenience define $D_k=\{0\}$ for $k<1$.
Note that $F_k=\langle v_{i,j} \mid i+k \leq k+1 \rangle$ and
$D_k \subset F_k$.

\begin{lemma}
For each integer $i \in [1,m]$, define $x_i=\sum_{j=1}^i (-1)^{j-1} f_{j,i+1-j} \in D_i$.  Then $\{x_1,x_2,\dots,x_m\}$ is linearly independent and $(g-1) x_i=0$ for all $i$.
\end{lemma}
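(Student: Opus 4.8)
The plan is to dispatch the two assertions independently, since each follows directly from Lemma~\ref{f_{i,j}} together with the grading of $\mathcal{B}$ recorded by the subspaces $D_k$. I would establish linear independence first, by a support argument requiring no computation. By construction each summand $f_{j,i+1-j}$ of $x_i$ has index sum $j+(i+1-j)=i+1$, so $x_i\in D_i$ as claimed. The key point is that each basis vector $f_{a,b}\in\mathcal{B}$ belongs to exactly one $D_k$, namely $k=a+b-1$, so the subspaces $D_1,\dots,D_m$ are spanned by pairwise disjoint subsets of $\mathcal{B}$. Hence the vectors $x_1,\dots,x_m$ have pairwise disjoint supports relative to $\mathcal{B}$. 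Since each $x_i$ is a combination of distinct basis vectors with nonzero (indeed $\pm1$) coefficients, it is itself nonzero, and a family of nonzero vectors with pairwise disjoint basis supports is automatically linearly independent.

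For the relation $(g-1)x_i=0$ I would apply Lemma~\ref{f_{i,j}} termwise, using $(g-1)f_{j,i+1-j}=f_{j-1,i+1-j}+f_{j,i-j}$, and then split the result into two sums:
\[
(g-1)x_i=\sum_{j=1}^i(-1)^{j-1}f_{j-1,i+1-j}+\sum_{j=1}^i(-1)^{j-1}f_{j,i-j}.
\]
In the first sum the $j=1$ term is $f_{0,i}=0$ and in the second the $j=i$ term is $f_{i,0}=0$, so both boundary contributions drop out under the convention of Lemma~\ref{f_{i,j}}. After reindexing the first sum by $j\mapsto j+1$, both sums run over $j\in[1,i-1]$ and involve the same vectors $f_{j,i-j}$; the shift turns the coefficient $(-1)^{j-1}$ into $(-1)^{j}$, which exactly opposes the coefficient $(-1)^{j-1}$ of the second sum, so the two sums cancel term by term and $(g-1)x_i=0$.

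Neither step is genuinely hard; the only place where care is needed is the bookkeeping in this last cancellation---correctly identifying the two vanishing boundary terms and checking that the index shift produces precisely the sign reversal that forces cancellation. That sign check is where an error would most plausibly arise, so I would verify it explicitly before concluding.
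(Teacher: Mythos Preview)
Your proof is correct and follows essentially the same approach as the paper: linear independence via the direct sum $F_m=D_1\oplus\cdots\oplus D_m$ (your disjoint-support argument is just an explicit unpacking of this), and the vanishing of $(g-1)x_i$ by splitting into two sums, reindexing one, and observing the telescoping cancellation with only the null boundary terms $f_{0,i}$ and $f_{i,0}$ surviving. The only cosmetic difference is that the paper keeps the boundary terms through the reindexing and discards them at the end, whereas you drop them first; the computation is otherwise identical.
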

\begin{proof}
First $\{x_1,x_2,\dots,x_m\}$ is linearly independent because $x_i \in D_i$ and $F_m=D_1 \oplus \dots \oplus D_m$.  Also
\begin{align*}
(g-1) x_i&=\sum_{j=1}^i (-1)^{j-1}f_{j,i-j} +\sum_{j=1}^i (-1)^{j-1}f_{j-1,i+1-j}\\
&=\sum_{j=1}^i (-1)^{j-1}f_{j,i-j}+\sum_{j=0}^{i-1}(-1)^j f_{j,i-j}\\
&=f_{i,0}+f_{0,i}\\
&=0.
\end{align*}
\end{proof}

Then, by Lemma~\ref{f_{i,j}},  $(g-1)^r(D_k) \subseteq D_{k-r}$ and  
\[(g-1)^r(f_{i,j})=\sum_{k=0}^r \binom{r}{k} f_{i+k-r,j-k},\] 
where we understand that $f_{i+k-r,j-k}=0$ if $i+k-r<0$ or $j-k<0$.

Hence $(g-1)^{m+n-2 k}(D_{m+n-k}) \subseteq D_k$ when $1 \leq k \leq m$
and
\[(g-1)^{m+n-2 k}(f_{i,j})=\sum_{\ell=0}^{m+n-2 k} \binom{m+n-2 k}{\ell}f_{i+\ell-m-n+2 k,j-\ell}.\]

Assume that $m \leq n$.  Denote the ordered $F$-basis
\[(f_{m-k+1,n}, f_{m-k+2,n-1} \dots, f_{m,n-k+1})\] of $D_{m+n-k}$ by $\mathcal{B}_{m+n-k}$
and the ordered $F$-basis $(f_{1,k},f_{2,k-1},\dots,f_{k,1})$ of $D_k$ by
$\mathcal{B}_{k}$.  In the case where $m=n$ and $k=m$,  $\mathcal{B}_{m+n-k}=\mathcal{B}_{k}$.

\begin{lemma}
Let $A_k(m,n)$ be the matrix with respect to the ordered $F$-bases $\mathcal{B}_{m+n-k}$ and $\mathcal{B}_{k}$ of $D_{m+n-k}$ and $D_k$, respectively.  Then
\[A_k(m,n)=\begin{pmatrix}
\binom{m+n-2 k}{n-k} & \binom{m+n-2 k}{n-k-1} & \dots & \binom{m+n-2 k}{n+1-2k}\\
\binom{m+n-2 k}{n+1-k} & \binom{m+n-2 k}{n-k} & \dots & \binom{m+n-2 k}{m+2-2k}\\
\vdots & \vdots & \ddots & \vdots \\
\binom{m+n-2 k}{n-1} & \binom{m+n-2 k}{n-2} & \dots & \binom{m+n-2 k}{n-k}
\end{pmatrix}.\]
\end{lemma}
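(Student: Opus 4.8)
The plan is to compute the map $(g-1)^{m+n-2k}\colon D_{m+n-k}\to D_k$ directly on each basis vector of $\mathcal{B}_{m+n-k}$ and read off its coordinates in the basis $\mathcal{B}_k$, since $A_k(m,n)$ is just the matrix of this map. Write $N=m+n-2k$ for brevity. The $s$-th vector of $\mathcal{B}_{m+n-k}$ is $f_{m-k+s,\,n-s+1}$ for $s=1,\dots,k$, and the $r$-th vector of $\mathcal{B}_k$ is $f_{r,\,k+1-r}$ for $r=1,\dots,k$. By the expansion of $(g-1)^{m+n-2k}(f_{i,j})$ already established above,
\[(g-1)^{N}\bigl(f_{m-k+s,\,n-s+1}\bigr)=\sum_{\ell=0}^{N}\binom{N}{\ell}f_{m-k+s+\ell-N,\;n-s+1-\ell}.\]

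First I would check that every nonzero term on the right lies in $D_k$: the two subscripts of $f_{m-k+s+\ell-N,\,n-s+1-\ell}$ sum to $(s+\ell+k-n)+(n-s+1-\ell)=k+1$, independent of $\ell$, which is exactly the defining relation $i+j=k+1$ for membership in $D_k$. This both reconfirms $(g-1)^{N}(D_{m+n-k})\subseteq D_k$ in this range and tells me that the term contributing to the $r$-th basis vector $f_{r,\,k+1-r}$ is the one whose first subscript equals $r$, i.e. $m-k+s+\ell-N=r$. Solving gives the unique index $\ell=n-k+r-s$.

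Hence the coefficient of $f_{r,\,k+1-r}$ in the image of the $s$-th basis vector is $\binom{N}{\,n-k+r-s\,}=\binom{m+n-2k}{\,n-k+r-s\,}$, with the usual convention that the binomial coefficient is $0$ when $n-k+r-s$ falls outside $[0,N]$ (consistent with the vanishing convention $f_{i,j}=0$ for out-of-range subscripts). Reading this as the $(r,s)$ entry of the matrix whose columns are the coordinate vectors of the images of $\mathcal{B}_{m+n-k}$, I obtain $A_k(m,n)_{r,s}=\binom{m+n-2k}{\,n-k+r-s\,}$. A quick check of the corners — $(r,s)=(1,1)$ and $(k,k)$ give $\binom{m+n-2k}{n-k}$, $(1,k)$ gives $\binom{m+n-2k}{n+1-2k}$, and $(k,1)$ gives $\binom{m+n-2k}{n-1}$ — matches the stated matrix, so letting $r,s$ range over $[1,k]$ fills in the remaining entries and completes the proof.

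The computation itself is routine; the only place demanding care is fixing the indexing convention (columns indexed by the domain basis $\mathcal{B}_{m+n-k}$, rows by the codomain basis $\mathcal{B}_k$) and verifying that the single surviving value of $\ell$ produces exactly the constant anti-diagonal pattern $n-k+r-s$, so that the matrix is the claimed array rather than its transpose. I expect no genuine obstacle beyond this bookkeeping.
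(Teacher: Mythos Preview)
Your proof is correct and follows essentially the same approach as the paper's own proof: apply the established formula for $(g-1)^{m+n-2k}(f_{i,j})$ to the basis vector $f_{m-k+s,\,n-s+1}$, solve for the unique $\ell$ whose first subscript equals $r$, and read off the $(r,s)$ entry as $\binom{m+n-2k}{n-k+r-s}$. The only differences are cosmetic (your $(r,s)$ are the paper's $(s,t)$) and that you add the harmless extra checks that the image lands in $D_k$ and that the corner entries match.
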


\begin{proof}
A typical element in $\mathcal{B}_{m+n-k}$ is $f_{m-k+t,n-t+1}$, $1 \leq t \leq k$, while a typical element in $\mathcal{B}_{k}$ is $f_{s,k+1-s}$, $1 \leq s \leq k$.

Now
\[(g-1)^{m+n-2k}(f_{m-k+t,n-t+1})=
\sum_{\ell=0}^{m+n-2k} \binom{m+n-2k}{\ell} 
f_{\ell+k+t-n,n-t+1-\ell}\]

When $\ell+k+t-n=s$ (and $n-t-\ell+1=k+1-s$), $\ell=n+s-k-t$.  Thus the coefficient $f_{s,k+1-s}$ in the expansion of $(g-1)^{m+n-2k}(f_{m-k+t,n-t+1})$ is 
\[\binom{m+n-2k}{n+s-k-t}.\]

This proves our lemma.
\end{proof}

\section{Statement of Results}\label{Results}

For an integer $k \in [1,m]$, define the $k \times 1$ column vector $C_k$ to be the coordinate matrix of of $x_k$ with respect to the basis $\mathcal{B}_k$ of $D_k$.  Then $C_k$ consists of alternating $1$'s and $-1$'s.  Then define the $k \times 1$ column vector $B_k$ by $B_k=\text{adj}(A_k) C_k$, where $\text{adj}(A_k(m,n))$ is the classical adjoint of $A_k(m,n)$ (so $A_k(m,n) \text{adj}(A_k(m,n))=(\det A_k(m,n))I_k=\text{adj}(A_k(m,n))A_k(m,n)$).

\begin{theorem}\label{Th1}
With $A_k(m,n)$ and $B_k$ defined as above, and $y_k$ defined by
\[y_k=\sum_{i=1}^k b_{i 1}f_{n-k+i,m+1-i}, \]
the equation $(g-1)^{n+m-2k} \cdot y_k=(\det A_k(m,n)) x_k$ holds.
\end{theorem}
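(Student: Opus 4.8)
The plan is to reduce the asserted identity to the single defining property of the classical adjoint, namely $A\,\text{adj}(A)=(\det A)I$, by translating every object into coordinates with respect to the ordered bases already introduced. The whole theorem is really a statement about two $k$-dimensional spaces, $D_{m+n-k}$ and $D_k$, and the linear map $(g-1)^{m+n-2k}$ between them, so once the coordinate dictionary is set up correctly the result should fall out with essentially no computation.

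First I would record that $y_k$ lies in $D_{m+n-k}$ and identify its coordinate vector. Each summand of $y_k$ is one of the basis vectors of $\mathcal{B}_{m+n-k}$ (its two indices sum to $m+n-k+1$, so it belongs to $D_{m+n-k}$), and the scalar attached to the $i$-th such basis vector is precisely the $i$-th entry $b_{i1}$ of $B_k$. Hence, by construction, the coordinate column vector of $y_k$ relative to the ordered basis $\mathcal{B}_{m+n-k}$ is exactly $B_k$. Next I would invoke the preceding lemma, which says that $A_k(m,n)$ is the matrix of the map $(g-1)^{m+n-2k}\colon D_{m+n-k}\to D_k$ with respect to the ordered bases $\mathcal{B}_{m+n-k}$ and $\mathcal{B}_k$. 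Therefore the coordinate vector of $(g-1)^{m+n-2k}y_k$ relative to $\mathcal{B}_k$ is the matrix product $A_k(m,n)\,B_k$.

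At this point I would simply substitute the definition $B_k=\text{adj}(A_k(m,n))\,C_k$ and apply the adjoint identity:
\[
A_k(m,n)\,B_k=A_k(m,n)\,\text{adj}(A_k(m,n))\,C_k=(\det A_k(m,n))\,I_k\,C_k=(\det A_k(m,n))\,C_k.
\]
Since $C_k$ is by definition the coordinate vector of $x_k$ with respect to $\mathcal{B}_k$, the right-hand side is the coordinate vector of $(\det A_k(m,n))\,x_k$. Comparing coordinate vectors in $D_k$ then yields $(g-1)^{m+n-2k}y_k=(\det A_k(m,n))\,x_k$, which is the claim.

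The argument is thus pure bookkeeping once $A_k(m,n)$, $C_k$, and $B_k$ are in place, and I expect the only real point requiring care—the step I would check most carefully—to be the consistency of the indexing conventions. Specifically, I must make sure that the $i$-th column of $A_k(m,n)$, the $i$-th entry $b_{i1}$ of $B_k$, and the $i$-th basis vector of $\mathcal{B}_{m+n-k}$ all refer to the same summand of $y_k$, so that multiplication by $A_k(m,n)$ genuinely computes the coordinate vector of the image. Once this alignment between the matrix, the vectors, and the ordered bases is confirmed, the adjoint identity delivers the theorem immediately.
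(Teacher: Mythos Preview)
Your proposal is correct and follows essentially the same argument as the paper: both proofs identify $B_k$ as the coordinate vector $[y_k]_{\mathcal{B}_{m+n-k}}$, apply the change-of-basis relation $[(g-1)^{m+n-2k}y_k]_{\mathcal{B}_k}=A_k(m,n)B_k$, and then use $A_k\,\mathrm{adj}(A_k)=(\det A_k)I_k$ together with $C_k=[x_k]_{\mathcal{B}_k}$ to conclude. Your additional remarks about verifying the alignment of indices are well placed, since the basis vectors appearing in the definition of $y_k$ must match the ordering of $\mathcal{B}_{m+n-k}$ for the coordinate identification to go through.
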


\begin{proof}
Since by~\cite[p. 392]{Anton2000},
\[[(g-1)^{n+m-2k} \cdot y_k]_{\mathcal{B}_k}=
[(g-1)^{n+m-2k}]_{\mathcal{B}_k,\mathcal{B}_{m+n-k}}[y_k]_{\mathcal{B}_{m+n-k}},
\]
we have
\[[(g-1)^{n+m-2k} \cdot y_k]_{\mathcal{B}_k}=A_k(m,n) B_k=A_k(m,n) \text{adj}(A_k(m,n)) C_k=(\det A_k(m,n)) C_k.\]
But $[x_k]_{\mathcal{B}_k}=C_k$, which implies that $(g-1)^{n+m-2k} \cdot y_k=(\det A_k(m,n)) x_k$.
\end{proof}

\begin{corollary}\label{Cor}
When $\lambda(m,n,p)$ is standard, then $A_k(m,n)$ is invertible for every integer $k \in [1,m]$, and if $y_1$, $y_2$, \dots, $y_m$ are defined as in Theorem~\ref{Th1}, $K G y_k \cong V_{n+m+1-2 k}$ ($1 \leq k \leq m$) and  
\[V_m \otimes V_n =K G y_1 \oplus K G y_2 \oplus \dots \oplus K G y_m.\]
\end{corollary}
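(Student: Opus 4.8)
The plan is to isolate the only substantive point—invertibility of each $A_k(m,n)$—and then to obtain the decomposition almost formally from Theorem~\ref{Th1}. Throughout I would exploit that the subspaces $D_1,\dots,D_{m+n-1}$ grade $V_m\otimes V_n$ as a $K$-vector space and that, by Lemma~\ref{f_{i,j}}, $g-1$ is homogeneous of degree $-1$ for this grading (it maps $D_j$ into $D_{j-1}$). Thus $V_m\otimes V_n$ is a graded module over $K[g-1]$, its graded Poincaré polynomial $P(z)=\sum_j(\dim D_j)z^j$ is symmetric and unimodal, and its graded-indecomposable summands are interval modules occupying consecutive grades $[a,a+L-1]$.

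First I would pin down where the standard summands sit. Forgetting the grading recovers the ordinary Jordan type, so when $\lambda(m,n,p)$ is standard the interval lengths are exactly $n+m-1,n+m-3,\dots,n-m+1$, all distinct. Since the support of $P$ is $\{1,\dots,m+n-1\}$, the interval of length $n+m-1$ must be $[1,m+n-1]$; peeling it off leaves the symmetric polynomial supported on $[2,m+n-2]$, forcing the next interval to be $[2,m+n-2]$, and so on. Hence the decomposition into intervals is forced to be the centered one: the summand of dimension $n+m+1-2k$ occupies grades $[k,m+n-k]$, with its one-dimensional socle in $D_k$ and its top line in $D_{m+n-k}$.

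With the centered picture in hand, invertibility of $A_k(m,n)=\big[(g-1)^{m+n-2k}\colon D_{m+n-k}\to D_k\big]$ follows by a rank count in a single grade. The top grade $D_{m+n-k}$ receives a one-dimensional contribution from precisely the $k$ summands of grades $[i,m+n-i]$ with $i\le k$, so $\dim D_{m+n-k}=k=\dim D_k$; within such a summand $(g-1)^{m+n-2k}$ carries that grade-$(m+n-k)$ line to a nonzero line of $D_k$ (for $i\le k$ the total drop stays within the block), and these images are independent because the summands are. A $k$-dimensional domain mapping onto $k$ independent images forces $A_k(m,n)$ to be an isomorphism, so $\det A_k(m,n)\neq 0$ in $K$.

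Finally I would assemble the decomposition. Given $\det A_k(m,n)\neq 0$, Theorem~\ref{Th1} gives $(g-1)^{m+n-2k}y_k=(\det A_k(m,n))x_k\neq 0$ while $(g-1)^{m+n-2k+1}y_k=0$ because $(g-1)x_k=0$; as $y_k\in D_{m+n-k}$ the iterates $(g-1)^ry_k$ ($0\le r\le m+n-2k$) lie in distinct grades and are nonzero (none can vanish, else the last would too), so $KGy_k$ is cyclic of dimension $n+m+1-2k$, whence $KGy_k\cong V_{n+m+1-2k}$. For the internal direct sum I would show the $mn$ vectors $(g-1)^ry_k$ form a $K$-basis by checking independence grade by grade: in $D_j$ the relevant vectors are $(g-1)^{m+n-k-j}y_k$, and from $(g-1)^{j-k}\big[(g-1)^{m+n-k-j}y_k\big]=(\det A_k(m,n))x_k$ together with $(g-1)x_k=0$ one kills the coefficients one index at a time by applying $(g-1)^{j-1},(g-1)^{j-2},\dots$ to any dependence. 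The count $\sum_{k}(n+m+1-2k)=mn$ then makes these a basis, so $V_m\otimes V_n=\bigoplus_k KGy_k$. The main obstacle is the first part: justifying that the standard Jordan decomposition is compatible with the $D_j$-grading as centered intervals; everything after that is bookkeeping driven by Theorem~\ref{Th1}. An alternative route to invertibility is to evaluate $\det A_k(m,n)$ directly—it is the determinant of a Toeplitz matrix of binomial coefficients—and to check its nonvanishing modulo $p$ against the criterion of \cite{B2015}.
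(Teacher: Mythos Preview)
Your argument is correct, and it takes a genuinely different route from the paper.  The paper isolates the same substantive point—invertibility of every $A_k(m,n)$—but proves it arithmetically: it invokes the closed product formula $\det A_k(m,n)=\prod_{\ell=0}^{k-1}\binom{m+n-2k+\ell}{n-k}/\binom{n-k+\ell}{n-k}$ from~\cite{GPX}, the recurrence $\binom{m+n-k-1}{k}d_{k+1}=\binom{m+n-2k-2}{n-k-1}d_k$ from~\cite{GPX2}, and then spends Sections~\ref{Char2}--17 verifying, via Kummer's carry theorem and a case-by-case induction over the explicit classification of standard pairs $(m,n)$ from~\cite{B2015}, that $\nu_p\bigl(\binom{m+n-k-1}{k}\bigr)=\nu_p\bigl(\binom{m+n-2k-2}{m-k-1}\bigr)$ for all relevant $k$.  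Your approach bypasses all of this arithmetic: you use only that $V_m\otimes V_n=\bigoplus_j D_j$ is a graded $K[g-1]$-module, that such modules decompose into interval modules (equivalently, the type-$A$ quiver classification), and that when the block sizes are $m+n-1,m+n-3,\dots$ a support count forces the intervals to be the centered ones $[k,m+n-k]$; invertibility of $(g-1)^{m+n-2k}\colon D_{m+n-k}\to D_k$ is then immediate.  What your route buys is brevity and conceptual clarity, and it does not require the classification in~\cite{B2015} at all—only the \emph{hypothesis} that $\lambda(m,n,p)$ is standard.  What the paper's route buys is explicit $p$-adic information about $\det A_k(m,n)$ tied directly to the base-$p$ digits of $m$ and $n$, which is of independent interest and is the paper's real content.  (For the deduction of the decomposition from invertibility, the paper simply cites~\cite{B2011} and~\cite[Theorem~2]{GPX}; your grade-by-grade independence check gives a self-contained version of that step.)
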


Once we prove that $A_k(m,n)$ is invertible for every integer $k \in [1,m]$, the rest of the proof follows the proof in~\cite{B2011} or~\cite[Theorem 2]{GPX}.

\section{Example}\label{Example}
We illustrate Theorem~\ref{Th1} when $m=4$, $n=5$, and $k=3$.  In this case $x_3=f_{1,3}-f_{2,2}+f_{3,1}$,
\[A_3(4,5)=\begin{pmatrix} \binom{3}{2} & \binom{3}{1} & \binom{3}{0}\\
\binom{3}{3} & \binom{3}{2} & \binom{3}{1}\\
\binom{3}{4} & \binom{3}{3} & \binom{3}{2} \end{pmatrix}=
\begin{pmatrix} 3 & 3 & 1 \\ 1 & 3 & 3 \\ 0 & 1 & 3 \end{pmatrix},\]
$\det A_3(4,5)=10$,  and
\[\text{adj}(A_3(4,5))C_3=
\begin{pmatrix}
6 & -8 & 6 \\ -3 & 9 &-8 \\ 1 & -3 & 6
\end{pmatrix}
\begin{pmatrix}
1 \\ -1 \\ 1
\end{pmatrix}=
\begin{pmatrix}
20 \\ -20 \\ 10
\end{pmatrix}.\]
Thus $y_3=20 f_{2,5}-20 f_{3,4}+10 f_{4,3}$ and
\begin{align*}
(g-1)^3 &\cdot (20 f_{2,5}-20 f_{3,4}+10 f_{4,3})\\
&=20 \sum_{k=0}^3 \binom{3}{k} f_{k-1,5-k} -20 \sum_{k=0}^3 \binom{3}{k} f_{k,4-k}
+10 \sum_{k=0}^3 \binom{3}{k} f_{k+1,3-k}\\
&=20(3 f_{1,3}+f_{2,2})-20 (3 f_{1,3} +3 f_{2,2}+f_{3,1})=10 (f_{1,3}+3 f_{2,2}+3 f_{3,1})\\
&=10 f_{1,3}-10 f_{2,2}+10 f_{3,1}\\
&=(\det A_3(4,5)) x_3.
\end{align*}

\section{Proof in characteristic $2$}\label{Char2}

By~\cite{B2015}, $\lambda(m,n,2)$ is standard with $1<m \leq n$ if{f} either $(m,n)=(2,n)$ with $n \geq 3$ odd or $(m,n)=(3,6+4 r)$ where $r$ is a non-negative integer.

When $n \geq 3$ is odd, $A_1(2,n)=(\binom{2+n-2}{n-1}=(n)$ and
\[A_2(2,n)=\begin{pmatrix}
\binom{n-2}{n-2} & \binom{n-2}{n-3}\\
\binom{n-2}{n-1} & \binom{n-2}{n-2}
\end{pmatrix}=
\begin{pmatrix}
1 & n-2 \\ 0 & 1
\end{pmatrix}.\]
Hence both are invertible in $K$.

When $n=6 +4 r$ where $r$ is a non-negative integer, $A_1(3,n)=(\binom{3+n-2}{n-1})=(\binom{n+1}{n-1})$,
\[A_2(3,n)=\begin{pmatrix}
\binom{n-1}{n-2} & \binom{n-1}{n-3}\\
\binom{n-1}{n-1} & \binom{n-1}{n-2}
\end{pmatrix}=
\begin{pmatrix}
n-1 & \binom{n-1}{n-3} \\ 1 & n-1
\end{pmatrix},\]
and
\[A_3(3,n)=
\begin{pmatrix}
\binom{n-3}{n-3} & \binom{n-3}{n-4} & \binom{n-3}{n-5} \\
\binom{n-3}{n-2} & \binom{n-3}{n-3} & \binom{n-3}{n-4}\\
\binom{n-3}{n-1} & \binom{n-3}{n-2} & \binom{n-3}{n-3}
\end{pmatrix}=
\begin{pmatrix}
1 & n-3 & \binom{n-3}{n-5}\\ 0 & 1 & n-3 \\ 0 & 0 & 1
\end{pmatrix}.\]
Since $\binom{n+1}{n-1}=1$ and $\binom{n-1}{n-3}=0$ in $K$, these matrices are invertible.

\section{Standard Jordan partitions in odd characteristic}\label{SJP}
For the remainder of this paper, $p$ is a fixed odd prime.  Define
\[S'_0=\{(k,d) \in \mathbb{N} \times \mathbb{N}\mid 1<k \leq d \leq p+1-k\} \cup \{(k,p+k-1) \mid 1<k \leq (p+1)/2\},\]
and $S_0=\{(a,b +r p) \mid (a,b) \in S'_0, r \in \{0\} \cup \mathbb{N}\}$.

For an integer $t \geq 1$, define $S'_t=(T_1 \setminus T_2) \cup T_3$ where
\[T_1=\{(i p^{t}+(p^{t} \pm 1)/2,j p^{t}+(p^{t} \pm 1)/2) \mid i, j \in \mathbb{N}, 1 \leq i  \leq j \leq p-i-1\},\]
\[T_2=\{(i p^{t}+(p^{t} + 1)/2,i p^{t}+(p^{t} - 1)/2) \mid i \in \mathbb{N}, 1 \leq i \leq (p-1)/2\},\]
and
\[T_3=\{(i p^{t}+(p^{t} + 1)/2,i p^{t}+(p^{t} - 1)/2+p^{t+1}) \mid i \in \mathbb{N}, 1 \leq i \leq (p-1)/2\},\]
and $S_t=\{(a,b+r p^{t+1}) \mid (a,b) \in S'_t, r \in \{0\} \cup \mathbb{N}\}$.

Then by~\cite{B2015}, $S= \cup_{t \geq 0} S_t$ is the set of ordered pairs $(m,n)$ of positive integers with $1<m \leq n$ such that $\lambda(m,n,p)$ is standard.  Note that for each $(m,n) \in S$, neither $m$  nor $n$ is a power of $p$.

\section{Proof in odd characteristic}

To prove Corollary~\ref{Cor}, we must show that  $A_k(m,n)$, which is a matrix with entries in the prime field of $K$, is invertible for every integer $k \in [1,m]$ when $(m,n) \in S$.

We know, when $1 \leq k \leq m$, by~\cite{GPX} that
\[d_k(m,n)=\det A_k(m,n)=\prod_{\ell=0}^{k-1} \frac{\binom{m+n-2 k+\ell}{n-k}}{\binom{n-k+\ell}{n-k}}
=\prod_{\ell=0}^{k-1} \frac{(m+n-2 k+\ell)! \ell!}{(n-k+\ell)!(m-k+\ell)!}.\]

In particular, $d_m(m,n)=1$ because $A_m(m,n)$ is upper triangular with $1$'s along the diagonal.

Denote the exact power of $p$ dividing a non-zero integer $w$ by $\nu_p(w)$.  We will use extensively a  theorem of Kummer~\cite{Gran1997} which states that $\nu_p(\binom{n}{m})$ is the number of `carries' required to add $m$ and $n-m$ in base-$p$.

By~\cite[Lemma 12]{GPX2},
\[\binom{m+n-k-1}{k} d_{k+1}(m,n)=\binom{m+n-2 k-2}{n-k-1}d_k(m,n)\] 
when $0 \leq k \leq m-1$, where $d_0(m,n)$ is defined to be $1$.

We need to show that $\nu_p(d_k(m,n))=0$ for every integer $k \in [1,m]$.  Since $d_0(m,n)=1$, it suffices to prove the following result.

\begin{proposition}\label{Prop}
For $(m,n) \in S$, 
\[\nu_p \left(\binom{m+n-k-1}{k} \right)=\nu_p \left(\binom{m+n-2 k-2}{m-k-1} \right) \qquad (*)\] for every positive integer $k$ in the interval $[0,m-1]$. 
\end{proposition}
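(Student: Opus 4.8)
The plan is to verify the carry-count identity $(*)$ of Kummer's theorem separately on each family $S_t$ of standard pairs. Since $S=\bigcup_{t\geq 0}S_t$ and each $S_t$ is obtained from a finite base set $S_t'$ by the $p^{t+1}$-periodic shift $n\mapsto n+rp^{t+1}$, the strategy is: (i) reduce the general claim on $S_t$ to the base cases in $S_t'$ by controlling how the base-$p$ digits of the relevant quantities change under the shift, and then (ii) check $(*)$ directly on the finitely-described base families by writing out base-$p$ expansions and counting carries.

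Let me think carefully about the combinatorics...The plan is to apply Kummer's theorem to both sides of $(*)$ and convert it into a statement about carries. The left side $\nu_p\binom{m+n-k-1}{k}$ counts the carries when $k$ and $m+n-2k-1$ are added in base $p$, while the right side $\nu_p\binom{m+n-2k-2}{m-k-1}$ counts the carries when $m-k-1$ and $n-k-1$ are added in base $p$ (since $k\le m-1\le n-1$, both of the latter are non-negative). Thus $(*)$ is equivalent to the assertion that these two carry counts agree for every $k\in[0,m-1]$; in particular the case $k=0$ records the requirement that $m-1$ and $n-1$ add without carrying. Proving this equality of carry counts directly, by examining the base-$p$ digits of $m$ and $n$, is the whole task, and the layered description $S=\bigcup_{t\ge0}S_t$ dictates the organization.

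First I would reduce the infinitely many members of each $S_t$ to the finitely described representatives in $S'_t$. Write $(m,n)=(a,b+rp^{t+1})$ with $(a,b)\in S'_t$ and $r\ge0$, so $m=a$. The description of $S'_t$ forces $m<p^{t+1}$: in the families for $t\ge1$ the coefficient $i$ of $p^t$ obeys $i\le(p-1)/2\le p-2$, while in $S'_0$ one has $m\le(p+1)/2<p$. Consequently $k$ and $m-k-1$ have base-$p$ digits only in positions $0,\dots,t$, and they are the summands that are left unchanged when $n$ is replaced by $n+rp^{t+1}$. Provided the two additions for the representative produce no carry out of position $t$, the carry count in positions $0,\dots,t$ is untouched by the shift, and the added multiple of $p^{t+1}$ meets a zero digit in every position $\ge t+1$ on each side and so creates no further carries; hence the identity for a general member of $S_t$ follows from the identity for its representative. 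This ``no carry out of position $t$'' property I would verify alongside the base-case digit analysis, reducing everything to proving $(*)$ on $S'_0$ and on $\bigcup_{t\ge1}S'_t$.

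It then remains to run the carry count on the base families from the explicit digit expansions. For $S'_0$ this is painless: whether $m+n\le p+1$ or $n=p+m-1$ with $m\le(p+1)/2$, one checks that each of the two additions $k+(m+n-2k-1)$ and $(m-k-1)+(n-k-1)$ is carry-free for every $k$, so both counts are $0$ and $(*)$ holds trivially. For $t\ge1$ the numbers $(p^t\pm1)/2$ have the transparent expansions $(p-1)/2$ in positions $0,\dots,t-1$ (for the $-$ sign), or $(p+1)/2$ in position $0$ and $(p-1)/2$ above it (for the $+$ sign), and $m=ip^t+(p^t\pm1)/2$, $n=jp^t+(p^t\pm1)/2$ then have completely explicit digits. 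I would run the carry count for each of the four sign combinations and across the ranges of $k$, using the constraint $1\le i\le j\le p-i-1$ to keep the position-$t$ digit $i+j$, together with any carry reaching it, below $p$.

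The main obstacle is precisely the boundary behaviour. When $i+j=p-1$ the sum $m+n-1$ spills into position $t+1$, and it is exactly here that the naive digit pattern can make the two carry counts disagree at some value of $k$. This is what the passage from $T_1$ to $T_1\setminus T_2$ and the insertion of $T_3$ (which pushes the offending $n$ up by $p^{t+1}$) are designed to repair. So the crux of the argument is to show that deleting the pairs in $T_2$ and adjoining those in $T_3$ is exactly what restores the equality of carry counts for every admissible $k$ in these extremal pairs; I expect this comparison, rather than the bulk digit bookkeeping, to require the most delicate case analysis.
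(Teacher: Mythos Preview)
Your framework matches the paper's: translate $(*)$ via Kummer into an equality of carry counts, strip off the periodic shift $n\mapsto n+rp^{t+1}$ to reduce $S_t$ to $S_t'$, and then analyse base-$p$ digits on $S_t'$. Your treatment of $S_0'$ (both additions are carry-free, so both sides vanish) and of the shift reduction is essentially what the paper does in Sections~\ref{base} and the final section.

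The genuine gap is the $t\ge1$ base case. The paper's argument here is \emph{inductive on $t$}, not a one-shot digit count. It first isolates the single diagonal pair $m=n=p^t+(p^t+1)/2$ and splits $k\in[0,m-1]$ into three ranges. For $0\le k\le (p^t-1)/2$ and for $p^t\le k\le m-1$ the two carry counts are shown to coincide with those for the pair $m'=n'=(p^t+1)/2$, which lies in $S_{t-1}$ and is handled by the inductive hypothesis. In the middle range $(p^t+1)/2\le k\le p^t-1$ both $\nu_p$ values are \emph{strictly positive}; Lemmas~\ref{Lpt1} and~\ref{Lpt2} shift $k$ by multiples of $p^{t-1}$ to push it back into the first range. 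All remaining pairs in $S_t'$ are then reduced, by further digit comparisons, to this pivotal diagonal pair. Your proposal replaces all of this with a direct, non-inductive carry count ``for each of the four sign combinations and across the ranges of $k$'', but supplies no mechanism for the middle range, where the two carry counts are nonzero and depend on the full base-$p$ expansion of $k$ in positions $0,\dots,t-1$ in a way that genuinely recurses to level $t-1$. Without the induction---or an explicit substitute for it---the argument does not close.

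A smaller point: you have misread $T_2$ and $T_3$. They have nothing to do with the boundary $i+j=p-1$. The pairs in $T_2$ are precisely the sign-$(+,-)$ diagonal pairs $\bigl(ip^t+(p^t+1)/2,\; ip^t+(p^t-1)/2\bigr)$, which have $m=n+1>n$ and so must be discarded; $T_3$ reinstates them with $n$ shifted up by $p^{t+1}$ so that $m\le n$ is restored. The $i+j=p-1$ pairs sit inside $T_1$ and need no special repair.
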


Note that since $d_m(m,n)=1$, we could restrict $k$ to $[0,m-2]$, but in the inductive step, having $k \in [0,m-1]$ proves useful.

We now outline the plan of the proof.  For a non-negative integer $t$, let $P(t)$ be the statement: Proposition~\ref{Prop} holds for every $(m,n) \in S_t$. First we prove the base case $t=0$, so $(m,n) \in S_0$, which means that $2 \leq m<p$, in Section~\ref{base}.  Then let $t>0$ and assume $P(t-1)$ is true.  In Section~\ref{inductstep} we use use our inductive hypothesis to show that Proposition~\ref{Prop} holds for $m=n=p^t+\frac{p^t+1}{2}$.  In subsequent sections  we use this result and our inductive hypothesis to show that Proposition~\ref{Prop} holds for all the other $(m,n) \in S_t$, which are listed below.
\begin{enumerate}
\item  $(m,n)=(ip^t+\frac{p^t+1}{2},ip^t+\frac{p^t+1}{2})$ where $1 < i \leq \frac{p-1}{2}$ \label{it1}
\item $(m,n)=(ip^t+\frac{p^t+1}{2},jp^t+\frac{p^t+1}{2})$ where $1 \leq i <j \leq p-i-1$
\item $(m,n)=(ip^t+\frac{p^t-1}{2},jp^t+\frac{p^t+1}{2})$ where $1 \leq i \leq j \leq p-i-1$
\item $(m,n)=(ip^t+\frac{p^t-1}{2},jp^t+\frac{p^t-1}{2})$ where $1 \leq i \leq j \leq p-i-1$
\item $(m,n)=(ip^t+\frac{p^t+1}{2},jp^t+\frac{p^t-1}{2})$ where $1 \leq i <j \leq p-i-1$
\item $(m,n)=(ip^t+\frac{p^t+1}{2},ip^t+\frac{p^t-1}{2}+p^{t+1})$ where $1 \leq  i \leq \frac{p-1}{2}$ \label{it2}
\item $(m,n+r p^{t+1})$, where $(m,n)$ comes from Cases~\ref{it1}--\ref{it2} and $r$ is a non-negative integer
\end{enumerate}
Then we will have shown that $P(t-1)$ implies $P(t)$ from which the result follows by appeal to mathematical induction.

\section{The case of $m<p$}\label{base}

Proposition~\ref{Prop} holds for $(m,n)\in S_0$.

\begin{proof}
Suppose that $1 < m \leq n \leq p+1-m$.  When $k=0$, $m-1 \leq n-1 <m+n-2\leq p-1$.  Hence
\[\nu_p \left(\binom{m+n-1}{0}\right)=0=\nu_p \left(\binom{m+n-2}{m-1} \right).\] 

Now assume $1 \leq k \leq m-1$.  Then
\[m+n-2 k-2<m+n-k-1 \leq p-k<p\] and
\[\nu_p \left(\binom{m+n-k-1}{k}\right)=0=\nu_p \left(\binom{m+n-2 k-2}{m-k-1} \right)\] for $k \in [1,m-1]$.

Suppose that $1<m \leq (p+1)/2$, $n=p+m-1$, and $0 \leq k \leq m-1$. When $k=m-1$, $m+n-k-1=p+2m-2-(m-1)=p+m-1$, $m+n-2k-1=p$, $m-k-1=0$, and $n-k-1=p+m-1-m=p-1$.  Thus
\[\nu_p\left(\binom{m+n-k-1}{k}\right)=\nu_p\left(\binom{p+m-1}{m-1}\right)=0
=\nu_p\left(\binom{p-1}{0}\right)=\nu_p\left(\binom{m+n-2k-2}{m-k-1}\right).\]

Assume $0 \leq k \leq m-2$.  Then
\[p+1 \leq m+n-2 k-2 < m+n-2 k-1 \leq m+n-k-1 \leq p+2m-2 \leq 2p-1\]
and 
\[p \leq n-k-1 \leq p+m-2<2 p.\]
Hence $\nu_p((m+n-2k-1)!)=\nu_p((m+n-2k-2)!)=1=\nu_p((m+n-k-1)!)=\nu_p((n-k-1)!)$ in this case.

Clearly $\nu_p(k!)=0=\nu_p((m-k-1)!)$.  It follows that
\[\nu_p \left(\binom{m+n-k-1}{k}\right)=0=\nu_p \left(\binom{m+n-2 k-2}{m-k-1} \right)\]
when $0 \leq k \leq m-2$.

In the remaining part of the proof we use the fact that $\nu_p(\binom{a}{b})$ is the number of carries in adding $b$ and $a-b$ in base $p$.

Suppose that $1 < m \leq n' \leq p+1-m$ and that $n=n'+r p$ where $r$ is a positive integer. We must show that then number of carries in adding $m+n -2k-1=m+n'-2k-1+rp$ and $k$ equals the number of carries in adding $m-k-1$ and $n-k-1=n'-k-1+r p$.  Since, by above, there are none in adding $m+n'-2k-1$ and $k$ and none in adding $m-k-1$ and $n'-k-1$ the result follows.

Suppose that $1<m \leq (p+1)/2$, $n'=p+m-1$, and $n=n'+r p$ where $r$ is a positive integer.  We must show that then number of carries in adding $m+n -2k-1=m+n'-2k-1+rp$ and $k$ equals the number of carries in adding $m-k-1$ and $n-k-1=n'-k-1+r p$.  Since, by above, there is no carry in adding $m+n'-2k-1$ and $k$ and none in adding $m-k-1$ and $n'-k-1$, the result is immediate.
\end{proof}

\section{The case of $m=n=p^t+\frac{p^t+1}{2}$}\label{inductstep}

\begin{theorem}\label{Tpt}
Proposition~\ref{Prop} holds for $m=n=p^t+\frac{p^t+1}{2}$.
\end{theorem}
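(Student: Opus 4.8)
The plan is to prove $(*)$ for $m=n=p^t+\frac{p^t+1}{2}$ by rewriting it, via Kummer's theorem, as an equality of carry counts, and then exploiting the self-similar base-$p$ structure of $m$ together with the inductive hypothesis $P(t-1)$.

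First I would record base-$p$ expansions. Since $\frac{p^t-1}{2}=\frac{p-1}{2}(1+p+\dots+p^{t-1})$, all $t$ of its base-$p$ digits equal $\frac{p-1}{2}$; hence $m$ has digit $1$ in position $t$, digit $\frac{p-1}{2}$ in positions $1,\dots,t-1$, and digit $\frac{p+1}{2}$ in position $0$. Deleting the leading digit leaves $\mu:=\frac{p^t+1}{2}$, so $m=p^t+\mu$. A direct check shows $(\mu,\mu)\in S_{t-1}$: when $t\ge 2$ it is the pair in $T_1$ (at level $t-1$) with $i=j=\frac{p-1}{2}$, and when $t=1$ it is $(\frac{p+1}{2},\frac{p+1}{2})\in S_0$. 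Thus $P(t-1)$ may be applied to $(\mu,\mu)$.

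Because $m=n$, the coefficients in $(*)$ are $\binom{2m-k-1}{k}$ and the central coefficient $\binom{2(m-k-1)}{m-k-1}$. Setting $s:=m-1-k$ and applying Kummer's theorem, $(*)$ becomes the claim that, for every $s\in[0,m-1]$, the number of carries in adding $m-1-s$ and $2s+1$ in base $p$ equals the number of carries in adding $s$ to itself. I would split according to the position-$t$ digit of $s$. For $s\in[0,\mu-1]$ one has $m-1-s=p^t+(\mu-1-s)$ and $2s+1\le p^t$, so the additions involve no net carry out of position $t$ and the count collapses to that for the pair $(\mu,\mu)$ at $s'=s$; for $s\in[p^t,m-1]$, writing $s=p^t+s'$ with $s'\in[0,\mu-1]$ gives $m-1-s=\mu-1-s'$ and the position-$t$ contributions again cancel on both sides, reducing the count to that for $(\mu,\mu)$ at $s'$. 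Since $s'$ sweeps all of $[0,\mu-1]$, these two ranges follow at once from $P(t-1)$. (The boundary value $s'=\mu-1$ makes one summand $0$, so both counts are $0$ there; working with carries rather than digit sums keeps this uniform, including when $p=3$.)

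The remaining middle range $s\in[\mu,p^t-1]$ is the genuinely new part and is where I expect the main difficulty. Here $s<p^t$, so $s$ has digit $0$ in position $t$, while the lower $t$ digits of $m-1$ are all equal to $\frac{p-1}{2}$. Substituting $s=p^t-1-r$ with $r\in[0,\mu-2]$ yields $m-1-s=\mu+r$ and $2s+1=p^t+(p^t-1-2r)$; tracking the single carry forced into position $t$ on each side, the claim reduces to the equality of the number of carries in $(\mu+r)+(p^t-1-2r)$ and in $(p^t-1-r)+(p^t-1-r)$ for $r\in[0,\mu-2]$. I would prove this by a direct digit-by-digit carry count, using that the digits of $p^t-1-r$ are the $(p-1)$-complements of those of $r$ and that the $\frac{p-1}{2}$-block of $m-1$ is uniform; this is the crux of the argument. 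Once the middle range is settled, it combines with the two inductive ranges to give $(*)$ for all $k\in[0,m-1]$, which is Proposition~\ref{Prop} for $m=n=p^t+\frac{p^t+1}{2}$, the result on which the treatment of the remaining pairs in $S_t$ will build.
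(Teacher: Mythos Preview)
Your three-range split via $s=m-1-k$ and the inductive reduction of the two outer ranges $s\in[0,\mu-1]$ and $s\in[p^t,m-1]$ to the pair $(\mu,\mu)\in S_{t-1}$ is exactly what the paper does (its cases (3) and (1), respectively, in the $k$-variable). The reduction you write down for the middle range $s\in[\mu,p^t-1]$ is also correct: after the substitution $s=p^t-1-r$ the problem becomes the carry-count identity
\[
\#\text{carries}\bigl((\mu+r)+(p^t-1-2r)\bigr)=\#\text{carries}\bigl((p^t-1-r)+(p^t-1-r)\bigr),\qquad 0\le r\le \mu-2.
\]

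The gap is that you stop here. You call this ``the crux of the argument'' and promise a direct digit-by-digit count using $(p-1)$-complements, but you do not carry it out. This is precisely the part where the real work lies: the paper devotes two separate lemmas (Lemma~\ref{Lpt1} and Lemma~\ref{Lpt2}) to the middle range, and their proofs involve genuine case distinctions on the $p^{t-1}$-digit, separate treatment of the boundary value $k=\tfrac{p+1}{2}p^{t-1}$, and an explicit check that $p=3$ behaves. Your complement idea is plausible, but note that the summands $\mu+r$ and $p^t-1-2r$ are \emph{not} simply complements of each other, and the digits of $2r$ depend on internal carries of the doubling; a clean inductive pattern on the digits of $r$ does not fall out automatically. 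Without the details, there is no way to tell whether your route avoids or merely relocates these subtleties.

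For comparison, the paper's strategy for this range is structurally different from yours: rather than a direct count, it shows (Lemma~\ref{Lpt1}) that passing from $k$ to $k-p^{t-1}$ changes \emph{both} $\nu_p$-values by exactly $+1$, thereby pushing the first block $[\mu,\mu+p^{t-1}-1]$ back into the already-settled range $[0,\mu-1]$; it then shows (Lemma~\ref{Lpt2}) that within the middle range the two $\nu_p$-values are invariant under $k\mapsto k+jp^{t-1}$. This shift-by-$p^{t-1}$ mechanism is what makes the induction close. If you pursue your complement approach instead, you should expect to reproduce comparable case analysis; as written, the proposal does not yet establish the theorem.
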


We will need two lemmas.  First we consider a subcase of $\frac{p^t+1}{2} \leq k \leq p^t-1$.

\begin{lemma}\label{Lpt1}
When an integer $k$ satisfies $\frac{p^t+1}{2} \leq k \leq \frac{p^t+1}{2}+p^{t-1}-1$,
\[\nu_p \left( \binom{2m-k-1}{k} \right)=\nu_p\left( \binom{2m -(k-p^{t-1})-1}{k-p^{t-1}} \right)+1\]
and
\[\nu_p \left( \binom{2m-2k-2}{m-k-1} \right)=\nu_p\left( \binom{2m -2(k-p^{t-1})-2}{m-(k-p^{t-1})-1} \right)+1.\]
\end{lemma}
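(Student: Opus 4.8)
The plan is to reduce both equations to the counting of base-$p$ carries via Kummer's theorem, or equivalently to tracking the base-$p$ digit sum $s_p$ through Legendre's formula $\nu_p\binom{a}{b} = (s_p(b) + s_p(a-b) - s_p(a))/(p-1)$. Throughout I would set $m=n=\tfrac{3p^t+1}{2}$ and write $k = \tfrac{p^t+1}{2}+j$ with $0 \le j \le p^{t-1}-1$, recording the three relevant quantities $k$, $M := 2m-2k-1 = 2p^t-1-2j$, and $N := m-k-1 = p^t-1-j$. The second binomial is central, $\binom{2m-2k-2}{m-k-1}=\binom{2N}{N}$, with shifted partner $\binom{2(N+p^{t-1})}{N+p^{t-1}}$; the first is $\binom{2m-k-1}{k}=\binom{k+M}{k}$, counting carries when adding $k$ and $M$, whose shifted partner is obtained by replacing $k$ by $k-p^{t-1}$ and correspondingly $M$ by $M+2p^{t-1}$.

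For the second equation I would argue directly with carries. Since $0 \le j \le p^{t-1}-1$, the numbers $N = (p-1)p^{t-1} + (p^{t-1}-1-j)$ and $N+p^{t-1} = p^t + (p^{t-1}-1-j)$ have identical base-$p$ digits in positions $0,\dots,t-2$ (both those of $p^{t-1}-1-j$) and differ only at position $t-1$ (digit $p-1$ versus $0$) and position $t$ (digit $0$ versus $1$). Doubling produces carries from the bottom up, so the carries in positions $0,\dots,t-2$, and in particular the carry entering position $t-1$, coincide for $N$ and $N+p^{t-1}$. At position $t-1$ the digit $p-1$ of $N$ always carries while the digit $0$ of $N+p^{t-1}$ never does, and the new top digit $1$ of $N+p^{t-1}$ doubles to $2<p$ without carrying. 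Hence doubling $N$ has exactly one more carry than doubling $N+p^{t-1}$, which is the asserted identity.

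For the first equation I would use the digit-sum form and express the difference of the two carry counts as
\[\frac{1}{p-1}\Big([s_p(k)-s_p(k-p^{t-1})] + [s_p(M)-s_p(M+2p^{t-1})] + [s_p(a+p^{t-1})-s_p(a)]\Big),\]
where $a=k+M=3p^t-k$. Each bracket is governed by a single base-$p$ digit. From $k = \tfrac{p-1}{2}p^{t-1} + (\tfrac{p^{t-1}+1}{2}+j)$ the position-$(t-1)$ digit of $k$ is $\tfrac{p-1}{2}$ or $\tfrac{p+1}{2}$, in either case at least $1$, so subtracting $p^{t-1}$ lowers $s_p$ by exactly $1$. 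Writing $a = 2p^t + (\tfrac{p^t-1}{2}-j)$ shows its position-$(t-1)$ digit is $\tfrac{p-1}{2}$ or $\tfrac{p-3}{2}$, at most $p-2$, so adding $p^{t-1}$ raises $s_p$ by exactly $1$. Finally $M = p^t + (p^t-1-2j)$ has position-$(t-1)$ digit $p-1$ or $p-2$ and position-$t$ digit $1$; adding $2p^{t-1}$ turns that digit into $1$ or $0$ with a single carry into position $t$, which lands on the digit $1$ and does not propagate, for a net change of $-(p-3)$ in $s_p$. Summing the three contributions gives $1+(p-3)+1=p-1$, so the carry counts differ by exactly $1$.

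The main obstacle, and the only place real care is needed, is pinning down these position-$(t-1)$ digits: each of $k$, $a$, $M$ may or may not receive an internal carry or borrow from its lower $t-1$ digits, according to how $j$ (resp. $2j$) compares with $\tfrac{p^{t-1}-1}{2}$ (resp. $p^{t-1}$). What makes the argument close cleanly is that these two subcases produce the \emph{same} net change in $s_p$ in every bracket, so the conclusion is case-independent; one must additionally confirm that the carry created by adding $2p^{t-1}$ to $M$ halts at position $t$, which holds because the position-$t$ digit of $M$ equals $1$ and $2<p$.
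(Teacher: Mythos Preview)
Your proposal is correct and follows essentially the same strategy as the paper: both proofs rest on Kummer's theorem and a base-$p$ digit analysis, identifying the position-$(t-1)$ digit as the sole place where the two binomials differ in carry behaviour. Your treatment of the central binomial $\binom{2N}{N}$ versus $\binom{2(N+p^{t-1})}{N+p^{t-1}}$ is verbatim the paper's argument.

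For the first equation there is a mild difference in bookkeeping worth noting. The paper compares carries position by position: it shows that in adding $2m-2k-1$ and $k$ the digits at position $t-1$ satisfy $a_{t-1}+b_{t-1}\ge p$, forcing a carry, while after the shift $a'_{t-1}+b'_{t-1}\le\frac{p+1}{2}$, so (with a separate check at $p=3$, $k=\frac{p+1}{2}p^{t-1}$) no carry occurs there; the lower digits being unchanged, this accounts for exactly one extra carry. You instead pass through Legendre's digit-sum identity and compute the three increments $s_p(k)-s_p(k-p^{t-1})=1$, $s_p(M)-s_p(M+2p^{t-1})=p-3$, $s_p(a+p^{t-1})-s_p(a)=1$, summing to $p-1$. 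This has the pleasant feature of being case-uniform (no special $p=3$ check needed), at the cost of a slightly less transparent ``one extra carry at one position'' picture. Both arguments use the same digit information and are equally valid; yours is perhaps marginally slicker.
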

\begin{proof}
Since
\[\frac{p^t+1}{2}-1=\frac{p^t-1}{2}=\frac{p-1}{2}p^{t-1}+\frac{p-1}{2}p^{t-2}+\dots +\frac{p-1}{2}p+\frac{p-1}{2},\]
\[\frac{p^t+1}{2}+p^{t-1}-1=\frac{p+1}{2}p^{t-1}+\frac{p-1}{2}p^{t-2}+\dots +\frac{p-1}{2}p+\frac{p-1}{2}.\]
Thus $k=b_{t-1} p^{t-1}+\dots + b_1 p+b_0$, where $b_{t-1}=\frac{p-1}{2}$ or $b_{t-1}=\frac{p+1}{2}$.
Now
\[-\frac{p^t+1}{2}-p^{t-1}+1 \leq -k \leq -\frac{p^t+1}{2},\]
implies
\[2 p^t+\frac{p^t-1}{2}-p^{t-1}+1 \leq 2m-1-k \leq 2 p^t+\frac{p^t-1}{2}\]
and
\[p^t+(p-2)p^{t-1}+1 \leq 2m -1-2k \leq p^t+p^t-1.\]

Therefore $2m-2k-1=p^t+a_{t-1} p^{t-1}+\dots+a_1p+a_0$, where $a_{t-1}=p-2$ or $a_{t-2}=p-1$.  Note if $k > \frac{p+1}{2}p^{t-1}$, so $b_{t-1}=\frac{p+1}{2}$, then
\[2 m-2 k-1=3 p^t-2k < 3 p^t-(p^t+p^{t-1})=p^t+(p-1)p^t,\]
and $a_{t-1}=p-2$.  If $k=\frac{p+1}{2} p^{t-1}$, then $b_{t-1}=\frac{p+1}{2}$ and $a_{t-1}=p-1$.  On the other hand, if $k < \frac{p+1}{2}p^{t-1}$, $b_{t-1}=\frac{p-1}{2}$ and
\[2m-2k-1=3 p^t-2k>3 p^t-p^t-p^{t-1}=p^t+(p-1)p^{t-1},\]
so $a_{t-1}=p-1$.  For all such $k$, $a_{t-1}+b_{t-1}\geq p+\frac{p-3}{2} \geq p$.  Hence there is a carry from the $p^{t-1}$ digit when we add $2m-2k-1$ and $k$.

Now $2 m-2(k-p^{t-1})-1=2 p^t+a'_{t-1}p^{t-1}+a_{t-2} p^{t-2}+\dots+a_1p+a_0$ where $a'_{t-1}=0$ when $a_{t-1}=p-2$ and $a'_{t-1}=1$ when $a_{t-1}=p-1$.  And $k-p^{t-1}=b'_{t-1}p^{t-1}+b_{t-2}p^{t-2}+\dots+b_1 p+b_0$ where $b'_{t-1}=b_{t-1}-1$.  Hence $a'_{t-1}+b'_{t-1}\leq \frac{p+1}{2}$ with strict inequality unless $k=\frac{p+1}{2} p^{t-1}$.  Since $\frac{p+1}{2}<p-1$ if $p >3$, there is no carry from the $p^{t-1}$ when we add $2m-2(k-p^{t-1})-1$ and $k-p^{t-1}$ in this case.  When $p=3$ and $k=\frac{p+1}{2} p^{t-1}=2 \cdot 3^{t-1}$, there is a carry from the $3^{t-1}$ digit in adding $2m-2k-1=3^t+2 \cdot 3^t$ and $k=2 \cdot 3^{t-1}$ and no carry in adding $2m-2(k-p^{t-1})-1=2 \cdot 3^t+3^{t-1}$ and $k-p^{t-1}=3^{t-1}$.

From what we have done, it follows that there is exactly one more carry in adding $2m-2k-1$ and $k$ than in adding $2m-2(k-p^{t-1})-1$ and $k-p^{t-1}$.  This proves
\[\nu_p \left( \binom{2m-k-1}{k} \right)=\nu_p\left( \binom{2m -(k-p^{t-1})-1}{k-p^{t-1}} \right)+1\]
when $\frac{p^t+1}{2} \leq k \leq \frac{p^t+1}{2}+p^{t-1}-1$.

Now $p^t-p^{t-1} \leq m-k-1 \leq p^t-1$.  Therefore
\[m-k-1=(p-1)p^{t-1}+c_{t-2}p^{t-2}+\dots +c_1 p+c_0,\]
and
\[m-(k-p^{t-1})-1=p^t+0 \cdot p^{t-1}+c_{t-2}p^{t-2}+\dots +c_1 p+c_0.\]
Hence there is one more carry in adding $m-k-1$ to itself than in adding $m-(k-p^{t-1})-1$ to itself.  This proves
\[\nu_p \left( \binom{2m-2k-2}{m-k-1} \right)=\nu_p\left( \binom{2m -2(k-p^{t-1})-2}{m-(k-p^{t-1})-1} \right)+1\]
when $\frac{p^t+1}{2} \leq k \leq \frac{p^t+1}{2}+p^{t-1}-1$.
\end{proof}

Next we handle the rest of the case $\frac{p^t+1}{2} \leq k \leq p^t-1$.
\begin{lemma}\label{Lpt2}
When $\frac{p^t+1}{2} \leq k \leq \frac{p^t+1}{2}+p^{t-1}-1$ and $k+j p^{t-1} \leq p^t-1$,
\[\nu_p \left( \binom{2m-k-1}{k} \right)=\nu_p \left( \binom{2m-(k+j p^{t-1})-1}{k+j p^{t-1}} \right)\]
and
\[\nu_p \left( \binom{2m-2k-2}{m-k-1} \right)=\nu_p \left( \binom{2m-2(k+j p^{t-1})-2}{m-(k+j p^{t-1})-1} \right).\]
\end{lemma}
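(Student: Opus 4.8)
The plan is to prove both identities through the additive form of Kummer's theorem, $\nu_p\binom{a+b}{a}=\frac{1}{p-1}\bigl(S_p(a)+S_p(b)-S_p(a+b)\bigr)$, where $S_p$ denotes the base-$p$ digit sum, and to show that the relevant digit sums change in a way that cancels. Throughout, recall $2m-1=3p^t$ and write $k=b_{t-1}p^{t-1}+\ell$ in base $p$ with $0\le\ell<p^{t-1}$; by the opening computation of Lemma~\ref{Lpt1} the leading digit satisfies $b_{t-1}\in\{\frac{p-1}{2},\frac{p+1}{2}\}$. The hypothesis $k+jp^{t-1}\le p^t-1$ forces $b_{t-1}+j\le p-1$, so replacing $k$ by $k'=k+jp^{t-1}$ merely raises the single digit in position $t-1$ from $b_{t-1}$ to $b_{t-1}+j$, with no carry; in particular $S_p(k')=S_p(k)+j$ and the digits of $k$ below position $t-1$ are untouched.

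For the first identity I would track the three quantities $k$, $2m-2k-1$, and $2m-k-1$. By Lemma~\ref{Lpt1}, $2m-2k-1$ has position-$t$ digit $1$ and position-$(t-1)$ digit $a_{t-1}\in\{p-2,p-1\}$. Passing to $k'$ subtracts $2jp^{t-1}$, i.e. lowers that digit by $2j$; the constraint gives $2j\le a_{t-1}$ in every case (for $b_{t-1}=\frac{p-1}{2}$ one has $j\le\frac{p-1}{2}$ and $a_{t-1}=p-1$, while for $b_{t-1}=\frac{p+1}{2}$ one has $j\le\frac{p-3}{2}$ and $a_{t-1}\ge p-2$), so there is no borrow and $S_p(2m-2k'-1)=S_p(2m-2k-1)-2j$. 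Likewise $2m-k-1=3p^t-k$ has position-$(t-1)$ digit at least $p-1-b_{t-1}\ge j$, so subtracting $jp^{t-1}$ produces no borrow and $S_p(2m-k'-1)=S_p(2m-k-1)-j$. Adding the three contributions, the $\nu_p$ of $\binom{2m-k-1}{k}$ changes by $\frac{1}{p-1}\bigl(j-2j-(-j)\bigr)=0$, which is the first identity.

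For the second identity I would use that $\binom{2m-2k-2}{m-k-1}=\binom{2(m-k-1)}{m-k-1}$, so $\nu_p$ counts the carries in doubling $d=m-k-1$. By Lemma~\ref{Lpt1}, $d$ has leading digit $p-1$ in position $t-1$, whence $d'=m-k'-1=d-jp^{t-1}$ shares the lower part $L=d\bmod p^{t-1}$ with $d$ and has leading digit $p-1-j$; thus $S_p(d')=S_p(d)-j$. Doubling $d$ gives position-$t$ digit $1$ and position-$(t-1)$ digit $p-2+c$, where $c\in\{0,1\}$ is the carry into position $t-1$ produced by doubling $L$. Subtracting $2jp^{t-1}$ lowers that digit by $2j$ without borrow, so $S_p(2d')=S_p(2d)-2j$, provided $2j\le p-2+c$; in that case the $\nu_p$ changes by $\frac{1}{p-1}\bigl(2(-j)-(-2j)\bigr)=0$.

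The one delicate point, and the step I expect to be the main obstacle, is verifying $2j\le p-2+c$ at the extreme value $j=\frac{p-1}{2}$, which can occur only when $b_{t-1}=\frac{p-1}{2}$ and there requires $c=1$. I would settle this by computing $L$ explicitly: when $b_{t-1}=\frac{p-1}{2}$, the bound $k\ge\frac{p^t+1}{2}$ gives $\ell\ge\frac{p^{t-1}+1}{2}$, and a short calculation yields $d=p^t-\delta$ with $1\le\delta\le\frac{p^{t-1}-1}{2}$, so that $L=p^{t-1}-\delta\ge\frac{p^{t-1}+1}{2}$. Hence $2L\ge p^{t-1}+1>p^{t-1}$, the doubling of the lower part overflows, and $c=1$, closing the gap. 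This argument is uniform in the odd prime $p$ (the case $p=3$ forces $j\le 1$ and is covered without separate treatment), so combining the two identities completes the proof of the lemma.
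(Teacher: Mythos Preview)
Your proof is correct and follows essentially the same route as the paper's: both arguments track the base-$p$ digit in position $t-1$ of the relevant quantities and reduce everything to showing that at the extreme value $j=\frac{p-1}{2}$ the lower part $L=d\bmod p^{t-1}$ satisfies $2L\ge p^{t-1}$, which you and the paper verify by the same bound $L\ge\frac{p^{t-1}+1}{2}$. The only difference is cosmetic: you use the Legendre digit-sum form $\nu_p\binom{a+b}{a}=\frac{S_p(a)+S_p(b)-S_p(a+b)}{p-1}$, which lets you treat $j<\frac{p-1}{2}$ and $j=\frac{p-1}{2}$ uniformly once the inequality $2j\le p-2+c$ (respectively $2j\le a_{t-1}$) is checked, whereas the paper counts carries directly and splits into these two cases explicitly.
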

\begin{proof}
Using the notation of the previous lemma, $m-k-1=(p-1)p^{t-1}+c_{t-2}p^{t-2}+\dots +c_1 p+c_0$.  Therefore $m-(k+j p^{t-1})-1=(p-1-j)p^{t-1}+c_{t-2}p^{t-2}+\dots +c_1 p+c_0$.  Now
\[j p^{t-1} \leq p^t-1-k \leq p^t-1-\frac{p^t+1}{2}=\frac{p^t-3}{2}
=\frac{p-1}{2} p^{t-1}+\frac{p-1}{2} p^{t-2}+\dots +\frac{p-1}{2}p+\frac{p-3}{2}.\]
Hence $j \leq \frac{p-1}{2}$ and $p-1-j \geq \frac{p-1}{2}$. If $j<\frac{p-1}{2}$, then the number of carries in adding $m-k-1$ to itself equals the number of carries in adding $m-(k+j p^{t-1})-1$ to itself. 

If $j=\frac{p-1}{2}$, then
\[\frac{p^t+1}{2} \leq  k\leq p^t-1-\frac{p-1}{2}p^{t-1}=\frac{p^t+p^{t-1}}{2}-1\]
and
\[p^t-\frac{p^{t-1}-1}{2} =(p-1) p^{t-1}+\frac{p^{t-1}+1}{2} \leq m-k-1 \leq p^t-1.\]
Thus in adding $m-k-1$ to itself there is a carry out of the $p^{t-2}$ digit.  Because $p-1-j=\frac{p-1}{2}$, when adding $m-(k+\frac{p-1}{2}p^{t-1})-1$ to itself, that carry out of the $p^{t-2}$ digit causes a carry out of the $p^{t-1}$ digit.  We have shown that the number of carries in adding $m-k-1$ to itself equals the number of carries in adding $m-(k+\frac{p-1}{2}p^{t-1})-1$ to itself.

From the previous lemma,
$2m-2k-1=p^t+a_{t-1} p^{t-1}+\dots+a_1p+a_0$, where $a_{t-1}=p-2$ or $a_{t-2}=p-1$, and
$k=b_{t-1} p^{t-1}+\dots + b_1 p+b_0$, where $b_{t-1}=\frac{p-1}{2}$ or $b_{t-1}=\frac{p+1}{2}$ and $a_{t-1}+b_{t-1} \geq p+\frac{p-3}{2} \geq p$.  Therefore
\[2m-2(k+j p^{t-1})-1=p^t+(a_{t-1}-2j) p^{t-1}+a_{t-2} p^{t-2}+\dots+ a_1 p+a_0\]
and $k+j p^{t-1}=(b_{t-1}+j) p^{t-1}+b_{t-2} p^{t-2}+\dots+b_1 p+b_0$.  If $j<\frac{p-1}{2}$, then the number of carries in adding $2m-2k-1$ and $k$ equals the number of carries in adding $2m-2(k+j p^{t-1})-1$ and $k+j p^{t-1}$.  

Assume that $j=\frac{p-1}{2}$.  Then $\frac{p^t+1}{2} \leq  k\leq p^t-1-\frac{p-1}{2}p^{t-1}=\frac{p^t+p^{t-1}}{2}-1$ from above, clearly $b_{t-1}=\frac{p-1}{2}$,   and $a_{t-1}=p-1$ because $p^t+(p-1)p^{t-1}+2 \leq 2m-2k-1 \leq 2 p^t-1$. 

Then the $p^{t-1}$ digit of $k+\frac{p-1}{2}p^{t-1}$ is $p-1$ and the $p^{t-1}$ digit of $2m-2(k+\frac{p-1}{2}p^{t-1})-1$ is $0$.  We must show there is a carry out of the $p^{t-2}$ digit in adding $2m-2(k+\frac{p-1}{2}p^{t-1})-1$ and $k+\frac{p-1}{2}p^{t-1}$ which results in a carry out of the $p^{t-1}$ digit.  Of course, there must be a carry out of the $p^{t-2}$ digit in adding $2m-2k-1$ and $k$.  It comes down to showing that when $\frac{p^t+1}{2} \leq  k\leq p^t-1-\frac{p-1}{2}p^{t-1}=\frac{p^t+p^{t-1}}{2}-1$,
\[(a_{t-2}p^{t-2}+\dots+a_1 p+a_0)+(b_{t-2} t^{t-2}+\dots+b_1+b_0) \geq p^{t-1}.\]  
Now
\begin{align*}
(a_{t-2}p^{t-2}+\dots+a_1 p+a_0)&+(b_{t-2} t^{t-2}+\dots+b_1+b_0) \geq p^{t-1}\\
&=(2m-2k-1-p^t-(p-1)p^{t-1})+(k-\frac{p-1}{2} p^{t-1})\\
&=3 p^t-k-p^t-(p-1) p^{t-1}-\frac{p-1}{2} p^{t-1}\\
&=\frac{p^t+3 p^{t-1}}{2}-k\\
&\geq \frac{p^t+3 p^{t-1}}{2}-(\frac{p^t+p^{t-1}}{2}-1)\\
&=p^{t-1}+1.
\end{align*}
\end{proof}

\begin{proof}[Proof of Theorem~\ref{Tpt}]
We break this proof into three cases, the second of which has subcases:
\begin{enumerate}
\item $0 \leq k \leq \frac{p^t+1}{2}-1$
\item $\frac{p^t+1}{2} \leq k \leq p^t-1$
\item $p^t \leq k \leq m-1$
\end{enumerate}

Assume that $0 \leq k \leq \frac{p^t-1}{2}$.  When $k=0$, $\binom{2m-1}{0}=1$, so $\nu_p(\binom{2m-1}{0})=0$.  On the other hand $\binom{2m-2}{m-1}=\binom{ 3 p^t-1}{p^t+\frac{p^t-1}{2}}$, and since there are no carries in adding $p^t+\frac{p^t-1}{2}$ to itself, $\nu_p(\binom{2m-2}{m-1})=0$.  Thus the result of Proposition~\ref{Prop} holds when $k=0$.

Assume then that $1 \leq k \leq \frac{p^t-1}{2}$. Then $2m-2k-1=(3 p^t+1)-2k-1=2 p^t+p^t-2k$,
\[2 p^t+1 \leq 2m-2k-1 \leq 2 p^t+(p^t-2),\]
and
\[2 p^t+\frac{p^t+1}{2} \leq 2m-k-1 \leq 3 p^t-1.\]

Also, $m-k-1=p^t+\frac{p^t+1}{2}-k-1=p^t+\frac{p^t-1}{2}-k$ and
\[p^t \leq m-k-1 \leq p^t+\frac{p^t-3}{2}.\]

Now the number of carries in adding $2 m-2k-1$ and $k$ equals the number of carries in adding $p^t-2k$ and $k$, and the number of carries in adding $m-k-1$ to itself equals the number of carries in adding $\frac{p^t-1}{2}-k$ to itself.

If we set $m'=\frac{p^t+1}{2}$, then $2m'-2k-1=p^t-2k$ and $m'-k-1=\frac{p^t-1}{2}-k$.  Then $(m',m')\in S_{t-1}$ and so the result (*) is true for all integers $k \in [1,m'-1]$ by induction.

Now assume that $\frac{p^t+1}{2} \leq k \leq p^t-1$.  When $k \leq \frac{p^t+1}{2}+p^{t-1}-1$, $0 \leq k-p^{t-1} \leq \frac{p^t+1}{2}-1$. Thus
\[\nu_p\left( \binom{2m -(k-p^{t-1})-1}{k-p^{t-1}} \right)=
\nu_p\left( \binom{2m -2(k-p^{t-1})-2}{m-(k-p^{t-1})-1} \right)\]
by our work above.  Hence, by Lemma~\ref{Lpt1},
\[\nu_p \left( \binom{2m-k-1}{k} \right)=\nu_p \left( \binom{2m-2k-2}{m-k-1} \right)\]
for every integer $k$ satisfying $\frac{p^t+1}{2} \leq k \leq  \frac{p^t+1}{2}+p^{t-1}-1$.

When $\frac{p^t+1}{2}+p^{t-1} \leq k \leq  p^t-1$, then $k=k'+j p^{t-1}$ where $\frac{p^t+1}{2} \leq k' \leq \frac{p^t+1}{2}+p^{t-1}-1$ and $j$ is a positive integer.  By the previous paragraph,
\[\nu_p \left( \binom{2m-k'-1}{k'} \right)=\nu_p \left( \binom{2m-2k'-2}{m-k'-1} \right).\]
Then, by Lemma~\ref{Lpt2},
\[\nu_p \left( \binom{2m-k-1}{k} \right)=\nu_p \left( \binom{2m-2k-2}{m-k-1} \right)\]
for every integer $k$ satisfying $\frac{p^t+1}{2}+p^{t-1} \leq k \leq  p^t-1$.

Assume now that $p^t \leq k \leq p^t+\frac{p^t+1}{2}-1$.  Then $1 \leq 2m-2k-1 \leq p^t$.  When $k=p^t$, $2 m-2k-1=p^t$ and there is no carry in adding $2m-2k-1$ and $k$ and no carry in adding $2m-2(k-p^t)-1$ and $k-p^t=0$.  Assume that $k>p^t$.  Then $2 m-2 k-1<p^t$ implying $2m-2k-1=a_{t-1} p^{t-1}+\dots+a_1p+a_0$, and $k=p^t+b_{t-1}p^{t-1}+\dots+b_1 p+b_0$.  But $2 m-2k -1=3 p^t-2k=p^t-2(k-p^t)$.  Thus $p^t-2(k-p^t)=a_{t-1} p^{t-1}+\dots+a_1p+a_0$.  Since $k-p^t=b_{t-1}p^{t-1}+\dots+b_1 p+b_0$, the number of carries in adding $2m-2k-1$ and $k$ equals the number of carries in adding $p^t-2(k-p^t)$ and $k-p^t$.  Thus
\[\nu_p \left( \binom{2m-k-1}{k} \right)=
\nu_p\left( \binom{(p^t+1)-2(k-p^t)-1}{k-p^t}\right)\]
when $p^t \leq k \leq p^t+\frac{p^t+1}{2}-1$.

On the other hand $m-k-1=\frac{p^t+1}{2}-(k-p^t)-1$.  Thus
\[\nu_p \left( \binom{2m-2k-2}{m-k-1} \right)=
\nu_p \left( \binom{(p^t+1)-2(k-p^t)-2}{\frac{p^t+1}{2}-(k-p^t)-1} \right)\]
when $p^t \leq k \leq p^t+\frac{p^t+1}{2}-1$.  If we set $m'=\frac{p^t+1}{2}$, then $(m',m') \in S_{t-1}$ and hence
\[\nu_p\left( \binom{(p^t+1)-2(k-p^t)-1}{k-p^t}\right)=
\nu_p \left( \binom{(p^t+1)-2(k-p^t)-2}{\frac{p^t+1}{2}-(k-p^t)-1} \right).\]
It follows that
\[\nu_p \left( \binom{2m-k-1}{k} \right)=
\nu_p \left( \binom{2m-2k-2}{m-k-1} \right)\]
when $p^t \leq k \leq p^t+\frac{p^t+1}{2}-1$.

\end{proof}

\section{The case of $m=n=i p^t+\frac{p^t+1}{2}$}

The next lemma reduces the case $(m,n)=(i p^t+\frac{p^t+1}{2},i p^t+\frac{p^t+1}{2})$ to the case $(m,n)=(\frac{p^t+1}{2},\frac{p^t+1}{2})$ or to the case $(m,n)=(p^t+\frac{p^t+1}{2},p^t+\frac{p^t+1}{2})$.

\begin{lemma}
Here $m=ip^t+(p^t+1)/2$ with $1 \leq i \leq \frac{p-1}{2}$.
\begin{enumerate}
\item When $0 \leq j \leq i$ and $j p^t \leq k \leq j p^t+\frac{p^t+1}{2}-1$,
\[\nu_p \left( \binom{2m-2k-2}{m-k-1} \right)=
\nu_p \left( \binom{(p^t+1)-2(k-j p^t)-2}{\frac{p^t+1}{2}-(k-j p^t)-1}\right)\]
and
\[\nu_p \left( \binom{2m-k-1}{k} \right)=
\nu_p \left( \binom{(p^t+1)-(k-j p^t)-1}{k-j p^t} \right).\]

\item When $0 \leq j \leq i$ and $j p^t+\frac{p^t+1}{2} \leq k \leq (j+1) p^t-1$,
\[\nu_p \left( \binom{2m-2k-2}{m-k-1} \right)=
\nu_p \left(\binom{(3p^t+1)-2(k-j p^t)-2}{p^t+\frac{p^t+1}{2}-(k-j p^t)-1}\right)\]
and
\[\nu_p \left( \binom{2m-k-1}{k} \right)=
\nu_p \left( \binom{(3p^t+1)-(k-j p^t)-1}{k-j p^t}\right).\]
\end{enumerate}
\end{lemma}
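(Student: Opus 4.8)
The plan is to derive all four displayed equalities directly from Kummer's theorem, in each case splitting the relevant base-$p$ addition into its lowest $t$ digits and its $p^t$-digit. Throughout I would write $m=ip^t+\tfrac{p^t+1}{2}$, so that $2m-1=(2i+1)p^t$ and $2m=(2i+1)p^t+1$, and record the expansions $\tfrac{p^t-1}{2}=\tfrac{p-1}{2}(p^{t-1}+\dots+1)$ and $\tfrac{p^t+1}{2}=\tfrac{p-1}{2}p^{t-1}+\dots+\tfrac{p-1}{2}p+\tfrac{p+1}{2}$. For a given $k$ I set $k'=k-jp^t$, so that in part (1) one has $0\le k'\le\tfrac{p^t-1}{2}$ and in part (2) one has $\tfrac{p^t+1}{2}\le k'\le p^t-1$. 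Since $\nu_p\binom{a}{b}$ counts the carries in adding $b$ and $a-b$ in base $p$, each claimed identity reduces to showing that the ``large'' addition and its reduced counterpart on the right produce the same number of carries; the reductions land in the case $(\tfrac{p^t+1}{2},\tfrac{p^t+1}{2})\in S_{t-1}$ for part (1) and in the case $(p^t+\tfrac{p^t+1}{2},p^t+\tfrac{p^t+1}{2})$ of Theorem~\ref{Tpt} for part (2), using $3p^t+1=2(p^t+\tfrac{p^t+1}{2})$.

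For part (1) I would first treat $\binom{2m-2k-2}{m-k-1}$, whose $\nu_p$ counts carries in doubling $m-k-1=(i-j)p^t+\bigl(\tfrac{p^t-1}{2}-k'\bigr)$. Because $0\le\tfrac{p^t-1}{2}-k'\le\tfrac{p^t-1}{2}$, its double stays below $p^t$, so no carry escapes the lowest $t$ digits, and because $i-j\le i\le\tfrac{p-1}{2}$ gives $2(i-j)\le p-1$, the $p^t$-digit produces no carry either; hence the carries all come from doubling $\tfrac{p^t-1}{2}-k'$, which is exactly $\nu_p\binom{(p^t+1)-2k'-2}{\frac{p^t+1}{2}-k'-1}$. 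For $\binom{2m-k-1}{k}$ I would add $k=jp^t+k'$ to $2m-2k-1=(2i-2j)p^t+(p^t-2k')$: the lower-digit sum $k'+(p^t-2k')=p^t-k'$ again stays below $p^t$, and the $p^t$-digit sum $j+(2i-2j)=2i-j\le p-1$ gives no carry, so the count collapses to the carries in adding $k'$ and $p^t-2k'$, namely $\nu_p\binom{(p^t+1)-k'-1}{k'}$. The only value needing separate attention is $k'=0$ with $2i-2j+1=p$ (forcing $i=\tfrac{p-1}{2}$, $j=0$), where $2m-2k-1=p^{t+1}$ rolls into position $t+1$; there $k=0$ and both carry counts are $0$, so the equality is immediate.

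For part (2) the lowest block is forced to overflow, but exactly once. Writing $m-k-1=(i-j-1)p^t+B$ with $B=\tfrac{3p^t-1}{2}-k'\in[\tfrac{p^t+1}{2},p^t-1]$, the double $2B\ge p^t+1$ produces precisely one carry out of the $p^{t-1}$-digit into the $p^t$-digit; since $i-j-1\le i-1\le\tfrac{p-3}{2}$ one has $2(i-j-1)+1\le p-2$, so that incoming carry is absorbed with no further carry, and the carries of the large self-addition coincide with those of doubling $B$, i.e.\ with $\nu_p\binom{(3p^t+1)-2k'-2}{p^t+\frac{p^t+1}{2}-k'-1}$. The same bookkeeping applied to $\binom{2m-k-1}{k}$, adding $k=jp^t+k'$ to $2m-2k-1=(2i-2j-1)p^t+(2p^t-2k')$, shows the lower block $k'+(2p^t-2k')=2p^t-k'$ overflows once while the $p^t$-digit sum $j+(2i-2j-1)$ absorbs that carry to give $2i-j\le p-1$, matching $\nu_p\binom{(3p^t+1)-k'-1}{k'}$; here the implicit constraint $k\le m-1$ of Proposition~\ref{Prop} forces $j\le i-1$, which is exactly what keeps $i-j-1\ge0$ and $2i-2j-1\ge0$.

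The main obstacle is the boundary digit analysis at positions $t-1$ and $t$: one must track precisely whether doubling (or adding) the lowest $t$-digit block overflows into the $p^t$-digit — it does not in part (1), it does exactly once in part (2) — and then verify that the hypothesis $i\le\tfrac{p-1}{2}$, together with $j\le i-1$ in part (2), is exactly strong enough to absorb any such carry and prevent a second one at or above position $t$. The handful of boundary values of $(i,j,k')$ where a $p^t$-digit reaches $p-1$ or would reach $p$ is the one place where the uniform argument needs a short separate check.
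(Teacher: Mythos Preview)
Your proposal is correct and follows essentially the same route as the paper: both arguments invoke Kummer's theorem, write $k=jp^t+k'$, decompose the relevant numbers as (digit at position $t$)$\cdot p^t$ plus a block of lowest $t$ digits, and then observe that the low blocks on the two sides coincide while the $p^t$-digit contributes no extra carry. If anything you are more explicit than the paper about why the $p^t$-digit is harmless --- the paper simply asserts the equality of carry counts once the low-digit identification is made, whereas you spell out that $2(i-j)\le p-1$, $2(i-j-1)+1\le p-2$, and $2i-j\le p-1$ are exactly what block a further carry; your singling out of the $k'=0$ edge case also mirrors the paper's separate treatment of $k=jp^t$.
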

\begin{proof}
(1) First suppose that $j p^t \leq k \leq j p^t+\frac{p^t+1}{2}-1$.  Then
\[(i-j) p^t \leq m-k-1 \leq (i-j) p^t+\frac{p^t-1}{2},\]
or
\[(i-j) p^t \leq i p^t+\frac{p^t-1}{2}-k \leq (i-j) p^t+\frac{p^t-1}{2}.\]
Hence
\[0 \leq \frac{p^t+1}{2}-1-(k-j p^t) \leq \frac{p^t+1}{2}-1.\]
Therefore if $m-k-1=(i-j) p^t+c_{t-1} p^{t-1}+\dots + c_1 p +c_0$, then
\[i p^t +\frac{p^t+1}{2}-1-k=(i-j) p^t+c_{t-1} p^{t-1}+\dots + c_1 p +c_0,\]
and
\[(i-j) p^t+\frac{p^t+1}{2}-1-(k-j p^t)=(i-j) p^t+c_{t-1} p^{t-1}+\dots + c_1 p +c_0,\]
from which it follows that
\[\frac{p^t+1}{2}-1-(k-j p^t)=c_{t-1} p^{t-1}+\dots + c_1 p +c_0.\]
Hence the number of carries in adding $m-k-1$ to itself equals the number of carries in adding $\frac{p^t+1}{2}-1-(k-j p^t$ to itself.  Thus

\[\nu_p \left( \binom{2m-2k-2}{m-k-1} \right)=
\nu_p \left(\binom{(3p^t+1)-2(k-j p^t)-2}{p^t+\frac{p^t+1}{2}-(k-j p^t)-1}\right).\]

Also
\[2(i-j) p^t+1 \leq 2m-2k-1 \leq (2(i-j)+1) p^t,\]
so
\[2 i p^t+1 \leq 2m-2(k-j p^t)-1 \leq (2 i+1)p^t,\]
and
\[0 \leq k-j p^t \leq \frac{p^t+1}{2}-1.\]
If $k=j p^t$, then $2 m-2k-1=(2(i-j)+1) p^t$, and there are zero carries in adding $2 m-2 k-1$ and $k$.  Also $2 m-2(k-j p^t)-1=(2i+1) p^t$ and $k-j p^t=0$, so there are zero carries in adding $2m-2(k-j p^t)-1$ and $k-j p^t$.  We can assume that $k>j p^t$, so $2 m-2k-1<(2(i-j)+1)p^t$.  Then $2 m-2k-1=2(i-j) p^t+a_{t-1}p^{t-1}+\dots a_1 p+a_0$ and $k=j p^t+b_{t-1} p^{t-1}+\dots + b_1 p+b_0$.  Since $2m-2k-1=(2i+1) p^t-2k=2 (i-j) p^t+p^t-2(k-j p^t)$.  Thus $p^t-2(k-j p^t)=a_{t-1}p^{t-1}+\dots a_1 p+a_0$ and $k-j p^t=b_{t-1} p^{t-1}+\dots + b_1 p+b_0$.  This shows that the number of carries in adding $2m-2k-1$ to $k$ equals the number of carries in adding $p^t-2(k-j p^t)$ to $k-j p^t$.  Hence
\[\nu_p \left( \binom{2m-k-1}{k} \right)=
\nu_p \left( \binom{(p^t+1)-(k-j p^t)-1}{k-j p^t} \right).\]

(2) Now suppose that $j p^t+\frac{p^t+1}{2} \leq k \leq (j+1) p^t-1$.  Note $j<i$ here.  Then
\[(2(i-j)-1)p^t+2 \leq 2m-2k-1 \leq 2(i-j)p^t-1\]
and $2 m-2 k-1=(2(i-j)-1) p^t+a_{t-1} p^{t-1}+ \dots + a_1 p +a_0$.  But
\[2m-2k-1=(2i+1)p^t-2k=2 i p^t+p^t-2k=(2(i-j)-1)p^t+2 p^t-2(k-j p^t).\]  Therefore $3 p^t-2(k-j p^t)=p^t+
a_{t-1} p^{t-1}+ \dots + a_1 p +a_0$ and
\[2 m-2(k- jp^t)-1=(2i-1) p^t+a_{t-1} p^{t-1}+ \dots + a_1 p +a_0.\]
Now $k =j p^t+b_{t-1} p^{t-1}+\dots+b_1 p+b_0$ and $k-j p^t=b_{t-1} p^{t-1}+\dots+b_1 p+b_0$.  Hence the number of carries in adding $2 m-2k-1$ to $k$ equals the number of carries in adding $3 p^t-2(k-j p^t)$ to $k-j p^t$.  Thus
\[\nu_p \left( \binom{2m-k-1}{k} \right)=
\nu_p \left( \binom{(3p^t+1)-(k-j p^t)-1}{k-j p^t}\right).\]

Now
\[(i-j-1)p^t +\frac{p^t+1}{2} \leq m-k-1 \leq (i-j-1)p^t +p^t-1\]
and
\[m-k-1=i p^t+\frac{p^t+1}{2}-k-1=(i-j-1)p^t+p^t+\frac{p^t+1}{2}-(k-j p^t)-1.\]
If $m-k-1=(i-j-1) p^t+c_{t-1} p^{t-1}+\dots +c_1p +c_0$, then
\[p^t+\frac{p^t+1}{2}-(k-j p^t)-1=c_{t-1} p^{t-1}+\dots +c_1p +c_0.\] This shows that the number of carries in adding $m-k-1$ to itself equals the number of carries in adding $p^t+\frac{p^t+1}{2}-(k-j p^t)-1$ to itself.
Hence
\[\nu_p \left( \binom{2m-2k-2}{m-k-1} \right)=
\nu_p \left(\binom{(3p^t+1)-2(k-j p^t)-2}{p^t+\frac{p^t+1}{2}-(k-j p^t)-1}\right).\]

\end{proof}

\begin{corollary}\label{i+i+}
Proposition~\ref{Prop} holds for $(m,n)=(i p^t+\frac{p^t+1}{2},i p^t+\frac{p^t+1}{2})$.
\end{corollary}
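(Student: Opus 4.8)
The plan is to use the preceding lemma to reduce every instance of $(*)$ for the pair $(m,m)$, where $m = ip^t + \frac{p^t+1}{2}$ with $1 \le i \le \frac{p-1}{2}$, to an instance of $(*)$ for one of two smaller diagonal pairs that have already been settled. First I would observe that the two families of intervals in the lemma tile the range $k \in [0,m-1]$: for each $j$ with $0 \le j \le i-1$ the interval $[jp^t,\, jp^t + \frac{p^t+1}{2}-1]$ of part~(1) and the interval $[jp^t + \frac{p^t+1}{2},\, (j+1)p^t - 1]$ of part~(2) together cover $[jp^t,\, (j+1)p^t-1]$, while for $j = i$ part~(1) covers $[ip^t,\, m-1]$ and part~(2) is vacuous, its left endpoint $ip^t + \frac{p^t+1}{2} = m$ already exceeding $m-1$. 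Hence the admissible intervals partition $[0,m-1]$, and it suffices to verify $(*)$ on each one separately.

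On an interval of type~(1) I set $m' = \frac{p^t+1}{2}$ and $k' = k - jp^t$, so that $0 \le k' \le m'-1$. Since $2m' = p^t+1$, the two identities of part~(1) of the lemma say exactly that
\[\nu_p\!\left(\binom{2m-2k-2}{m-k-1}\right) = \nu_p\!\left(\binom{2m'-2k'-2}{m'-k'-1}\right)\]
and
\[\nu_p\!\left(\binom{2m-k-1}{k}\right) = \nu_p\!\left(\binom{2m'-k'-1}{k'}\right).\]
I would then verify that $\left(\frac{p^t+1}{2},\frac{p^t+1}{2}\right) \in S_{t-1}$: the expansion $\frac{p^t+1}{2} = \frac{p-1}{2}p^{t-1} + \frac{p^{t-1}+1}{2}$ exhibits this pair as a diagonal point of $T_1$ with $i' = j' = \frac{p-1}{2}$, the constraint $j' \le p - i' - 1$ holding with equality (and for $t=1$ the pair $\left(\frac{p+1}{2},\frac{p+1}{2}\right)$ lies in $S'_0$). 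Consequently the inductive hypothesis $P(t-1)$ gives $(*)$ for $(m',m')$ at $k'$; that is, the two right-hand sides above are equal, so the two left-hand sides are equal, which is precisely $(*)$ for $(m,m)$ at $k$.

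On an interval of type~(2) I set $m'' = p^t + \frac{p^t+1}{2}$ and $k'' = k - jp^t$, so that $\frac{p^t+1}{2} \le k'' \le p^t - 1 \le m''-1$ (here necessarily $j < i$). Since $2m'' = 3p^t+1$, part~(2) of the lemma reads
\[\nu_p\!\left(\binom{2m-2k-2}{m-k-1}\right) = \nu_p\!\left(\binom{2m''-2k''-2}{m''-k''-1}\right)\]
and
\[\nu_p\!\left(\binom{2m-k-1}{k}\right) = \nu_p\!\left(\binom{2m''-k''-1}{k''}\right).\]
But $\left(p^t+\frac{p^t+1}{2},\, p^t+\frac{p^t+1}{2}\right)$ is exactly the pair treated in Theorem~\ref{Tpt}, so $(*)$ holds for $(m'',m'')$ at $k''$; the right-hand sides are then equal, hence so are the left-hand sides, giving $(*)$ for $(m,m)$ at $k$. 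Ranging over all $j$ and both interval types then proves the corollary.

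The essential bookkeeping is the tiling claim together with the checks that the reduced index lands in $[0,m'-1]$ on type~(1) intervals and in $[0,m''-1]$ on type~(2) intervals, so that the cited result is genuinely available there. The one structural point I would be most careful about is the membership $\left(\frac{p^t+1}{2},\frac{p^t+1}{2}\right) \in S_{t-1}$, since it is this fact, rather than Theorem~\ref{Tpt}, that licenses the appeal to the inductive hypothesis in the type~(1) case.
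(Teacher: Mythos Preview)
Your proof is correct and follows the same approach as the paper: you invoke the preceding lemma to reduce each $k$ either to the pair $\bigl(\tfrac{p^t+1}{2},\tfrac{p^t+1}{2}\bigr)\in S_{t-1}$ (handled by the inductive hypothesis $P(t-1)$) or to the pair $\bigl(p^t+\tfrac{p^t+1}{2},p^t+\tfrac{p^t+1}{2}\bigr)$ (handled by Theorem~\ref{Tpt}), exactly as the paper does. Your version is simply more explicit about the tiling of $[0,m-1]$, the range of the reduced index, and the verification that $\bigl(\tfrac{p^t+1}{2},\tfrac{p^t+1}{2}\bigr)\in S_{t-1}$, all of which the paper leaves implicit.
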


\begin{proof}
When $j p^t \leq k \leq j p^t+\frac{p^t+1}{2}-1$,
\[\nu_p \left( \binom{2m-2k-2}{m-k-1} \right)=
\nu_p \left( \binom{(p^t+1)-2(k-j p^t)-2}{\frac{p^t+1}{2}-(k-j p^t)-1}\right)\]
and 
\[\nu_p \left( \binom{2m-k-1}{k} \right)=
\nu_p \left( \binom{(p^t+1)-(k-j p^t)-1}{k-j p^t} \right).\]
Set $m' =\frac{p^t+1}{2}$.  Then $(m',m') \in S_{t-1}$, so Proposition~\ref{Prop} holds for $(m',m')$ by inductive assumption and
\[\nu_p \left( \binom{(p^t+1)-2(k-j p^t)-2}{\frac{p^t+1}{2}-(k-j p^t)-1}\right)=
\nu_p \left( \binom{(p^t+1)-(k-j p^t)-1}{k-j p^t} \right).\]
Thus 
\[\nu_p \left( \binom{2m-2k-2}{m-k-1} \right)=
\nu_p \left( \binom{2m-k-1}{k} \right)\] for such $k$.

When $j p^t+\frac{p^t+1}{2} \leq k \leq (j+1) p^t-1$,
\[
\nu_p \left( \binom{2m-2k-2}{m-k-1} \right)=\nu_p \left(\binom{(3p^t+1)-2(k-j p^t)-2}{p^t+\frac{p^t+1}{2}-(k-j p^t)-1}\right)\]
and
\[\nu_p \left( \binom{2m-k-1}{k} \right)=
\nu_p \left( \binom{(3p^t+1)-(k-j p^t)-1}{k-j p^t}\right).
\]
Set $m'=p^t+\frac{p^t+1}{2}$.  Then Proposition~\ref{Prop} holds for $(m',m')$ by Theorem~\ref{Tpt}.  Hence
\[\nu_p \left( \binom{(3 p^t+1)-2(k-j p^t)-2}{ p^t+\frac{p^t+1}{2}-(k-j p^t)-1}\right)=\nu_p \left( \binom{(3p^t+1)-(k-j p^t)-1}{k-j p^t}\right).
\]
Thus 
\[\nu_p \left( \binom{2m-2k-2}{m-k-1} \right)=
\nu_p \left( \binom{2m-k-1}{k} \right)
\] for all such $k$.
\end{proof}

\section{The case of $(m,n)=(i p^t+\frac{p^t+1}{2},j p^t+\frac{p^t+1}{2})$}
 Here $1 \leq i \leq j \leq p-1-i$, so $i \leq \frac{p-1}{2}$.
\begin{lemma}\label{lemi+j+}
For every integer $k \in [0,m-1]$,
\[\nu_p\left( \binom{m+n-k-1}{k} \right)=
\nu_p \left(\binom{2m-k-1}{k} \right)\]
and
\[\nu_p\left( \binom{m+n-2k-2}{m-k-1}\right)=
\nu_p\left( \binom{2m-2k-2}{m-k-1}\right).\]
\end{lemma}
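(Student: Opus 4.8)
The plan is to turn both displayed equalities into statements about carries in base-$p$ addition via Kummer's theorem and then exploit the fact that the discrepancy $n-m=(j-i)p^t$ is supported entirely in positions $\ge t$. First I would apply Kummer to each of the four binomial coefficients. For the first equation, $\nu_p\binom{m+n-k-1}{k}$ counts the carries in adding $k$ and $m+n-2k-1$, while $\nu_p\binom{2m-k-1}{k}$ counts the carries in adding $k$ and $2m-2k-1$; for the second, $\nu_p\binom{m+n-2k-2}{m-k-1}$ counts the carries in adding $m-k-1$ and $n-k-1$, and $\nu_p\binom{2m-2k-2}{m-k-1}$ counts the carries in adding $m-k-1$ to itself. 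Thus in each equation the two additions share one summand---namely $k$ in the first and $m-k-1$ in the second---while the remaining summands differ by exactly $n-m=(j-i)p^t$.

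Next I would record the decisive size bound on the shared summand. Since $0\le k\le m-1=ip^t+\tfrac{p^t-1}{2}$, we have $k<(i+1)p^t\le p^{t+1}$, so in base $p$ the shared summand has no digit above position $t$ and its position-$t$ digit is at most $i$; the same holds for $m-k-1$. Because the two additions being compared differ only by the multiple $(j-i)p^t$ of $p^t$, their digits in positions $0,\dots,t-1$ coincide. Hence the carries produced in those positions, and in particular the carry $c\in\{0,1\}$ propagated into position $t$, are identical for the $(m,n)$ computation and the diagonal $(m,m)$ computation.

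It then remains to show that neither addition produces a carry in any position $\ge t$; granting this, the two carry counts are equal and the lemma follows. This is the heart of the matter, and the hypotheses $1\le i\le j\le p-1-i$ enter here decisively: they force $i\le\frac{p-1}{2}$, hence $2i+1\le p$, and also $i+j+1\le p$ (so in particular the content is vacuous when $p=3$, where one is forced to $i=j=1$). The point is that the only quantity added at position $t$ is the shared summand's digit (at most $i$) plus the incoming carry $c$, together with the high part of the other summand; a short computation shows the resulting value is bounded by $i+j+1\le p$ in the $(m,n)$ sum and by $2i+1\le p$ in the $(m,m)$ sum, and that it stays \emph{strictly} below $p$ whenever the small addend contributed at position $t$ is nonzero, so that no carry can escape position $t$ in either case.

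I expect the main obstacle to be precisely this last step, whose bookkeeping must be organized by the size of the low part $r$ of the shared summand: the three ranges $r=0$, $1\le r\le\tfrac{p^t-1}{2}$, and $\tfrac{p^t+1}{2}\le r\le p^t-1$ determine both the borrow that shifts the high-part digit and the value of the incoming carry $c$, and each range must be checked against the bounds $2i+1\le p$ and $i+j+1\le p$. Once the absence of carries above position $t$ is established in every subcase, the equality of the two $\nu_p$-values is immediate, and this is exactly the reduction that lets the ensuing corollary deduce Proposition~\ref{Prop} for $(m,n)=(ip^t+\tfrac{p^t+1}{2},jp^t+\tfrac{p^t+1}{2})$ from the already-settled diagonal case treated in Corollary~\ref{i+i+}.
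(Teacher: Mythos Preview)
Your approach is essentially the paper's: apply Kummer, note that the two additions being compared share one summand and differ in the other by $(j-i)p^t$, so carries in positions $0,\dots,t-1$ agree, and then check there is no carry out of position~$t$. The paper, however, dispatches your anticipated ``main obstacle'' in one line rather than by case analysis on the low part~$r$: for $k>0$ one has $m+n-k-1<p^{t+1}$ (and a fortiori $2m-k-1<p^{t+1}$), so the total sum has no digit at position $t+1$ and hence no carry leaves position~$t$; the only extra remark needed is that $a_t+(j-i)\le 2i+(j-i)=i+j\le p-1$, so the shifted quantity really has the claimed base-$p$ expansion. The same device handles the second equality via $m+n-2k-2<p^{t+1}$, so your three-range bookkeeping is unnecessary.
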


\begin{proof}
If $2m-2k-1=a_t p^t+a_{t-1} p^{t-1}+\dots a_1 p+a_0$ and $k=b_t p^t+b_{t-1} p^{t-1}+\dots + b_1 p+b_0$, then $m+n-2k-1=(a_t+j-i)p^t+a_{t-1} p^{t-1}+\dots a_1 p+a_0$.  If $k=0$, then the number of carries in adding $m+n-1$ and $0$ equals the number of carries in adding $2 m-1$ and $0$.  Assume that $k>0$, in which case  $a_t \leq 2 i \leq i+j \leq p-1$ and $m+n-k-1<p^{t+1}$.  Thus the number of carries in adding $m+n-2k-1$ and $k$ equals the number of carries in adding $2m-2k-1$ and $k$, and therefore
\[\nu_p\left( \binom{m+n-k-1}{k} \right)=
\nu_p \left(\binom{2m-k-1}{k} \right).\]
If $m-k-1=c_t p^t+c_{t-1} p^{t-1}+\dots + c_1 p+c_0$, then $n-k-1=(c_t+j-i) p^t+c_{t-1} p^{t-1}+\dots + c_1 p+c_0$ and $m+n-2k-2 \leq m+n-2 < p^{t+1}$.  Thus the number of carries in adding $m-k-1$ and $n-k-1$ equals the number of carries in adding $m-k-1$ to itself, and therefore
\[\nu_p\left( \binom{m+n-2k-2}{m-k-1}\right)=
\nu_p\left( \binom{2m-2k-2}{m-k-1}\right).\]
\end{proof}

\begin{corollary}\label{Cori+j+}
Proposition~\ref{Prop} holds for $(m,n)=(i p^t+\frac{p^t+1}{2},j p^t+\frac{p^t+1}{2})$.
\end{corollary}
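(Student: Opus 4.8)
The plan is to combine the two results immediately preceding this statement. Lemma~\ref{lemi+j+} transfers both $p$-adic valuations appearing in the identity (*) from the rectangular pair $(m,n)=(i p^t+\frac{p^t+1}{2},j p^t+\frac{p^t+1}{2})$ to the square pair $(m,m)$, while Corollary~\ref{i+i+} has already established (*) for the square pair $(m,m)=(i p^t+\frac{p^t+1}{2},i p^t+\frac{p^t+1}{2})$. Chaining these together will yield the result.

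First I would observe that $m=i p^t+\frac{p^t+1}{2}$ is the common first coordinate, so that $2m-k-1$ and $2m-2k-2$ are exactly the quantities appearing in (*) for the square pair $(m,m)$. The hypotheses $1\leq i\leq j\leq p-1-i$ force $2i\leq p-1$, hence $i\leq (p-1)/2$, so the square pair $(m,m)$ indeed lies in the range handled by Corollary~\ref{i+i+}. Moreover, both Lemma~\ref{lemi+j+} and Proposition~\ref{Prop} for $(m,m)$ concern the same range $k\in[0,m-1]$, so no range mismatch arises.

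Then, for each integer $k\in[0,m-1]$, I would write the chain
\[
\nu_p\binom{m+n-k-1}{k}
=\nu_p\binom{2m-k-1}{k}
=\nu_p\binom{2m-2k-2}{m-k-1}
=\nu_p\binom{m+n-2k-2}{m-k-1},
\]
where the first and last equalities are the two assertions of Lemma~\ref{lemi+j+}, and the middle equality is (*) applied to the square pair $(m,m)$ via Corollary~\ref{i+i+}. This is precisely (*) for $(m,n)$, completing the proof.

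The substantive carry-counting in base $p$ has already been carried out in Lemma~\ref{lemi+j+}, and the inductive reduction to the cases $(\frac{p^t+1}{2},\frac{p^t+1}{2})$ and $(p^t+\frac{p^t+1}{2},p^t+\frac{p^t+1}{2})$ is encapsulated in Corollary~\ref{i+i+}. Consequently there is no real computational obstacle remaining here; the only point requiring care is the bookkeeping that ensures the square pair $(m,m)$ falls within the scope of Corollary~\ref{i+i+} and that the ranges of $k$ agree, which I verified above.
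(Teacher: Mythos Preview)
Your proposal is correct and follows essentially the same approach as the paper: you invoke Lemma~\ref{lemi+j+} to reduce both sides of $(*)$ for $(m,n)$ to the corresponding quantities for the square pair $(m,m)$, and then apply Corollary~\ref{i+i+} to that square pair. The paper's proof is identical in substance, merely stated more tersely; your added checks that $i\leq(p-1)/2$ and that the $k$-ranges agree are helpful and do not depart from the paper's argument.
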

\begin{proof}
Since, by Corollary~\ref{i+i+},
\[\nu_p \left(\binom{2m-k-1}{k} \right)=\nu_p\left( \binom{2m-2k-2}{m-k-1}\right)\]
for every integer $k \in [0,m-1]$, the result follows from Lemma~\ref{lemi+j+}.
\end{proof}

\section{Interlude}

In the next three sections, $m=ip^t+\frac{p^t \pm 1}{2}$, $n =j p^t+\frac{p^t \pm 1}{2}$, $m \leq n$, and $(m,n) \in S'_t$.  We will relate $\nu_p(\binom{m+n-k-1}{k})$ to $\nu_p(\binom{m_1+n_1-k-1}{k})$ and $\nu_p(\binom{m+n-2k-2}{m-k-1})$ to $\nu_p(\binom{m_1+n_1-2 k-2}{m_1-k-1})$ where $(m_1,n_1)=(ip^t+\frac{p^t + 1}{2},jp^t+\frac{p^t + 1}{2})$.  Since by Corollary~\ref{Cori+j+}, the result holds for $(m_1,n_1)$, it will follow immediately for $(m,n)$.

\section{The case of $(m,n)=(i p^t+\frac{p^t-1}{2},j p^t+\frac{p^t+1}{2})$}
Here $1 \leq i \leq j \leq p-1-i$.

\begin{lemma}\label{lemi-j+}
Let $m_1=m+1$.  For every integer $k \in [0,m-1]$,
\[\nu_p \left( \binom{m+n-k-1}{k}\right)=
\nu_p \left( \binom{m_1+n-k-1}{k} \right)
 \]
and
\[\nu_p \left( \binom{m+n-2k-2}{m-k-1} \right)=
\nu_p \left( \binom{m_1+n-2k-2}{m_1-k-1} \right)
.\]
\end{lemma}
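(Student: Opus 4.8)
The plan is to reduce, via Kummer's theorem~\cite{Gran1997}, each of the two asserted equalities to the statement that raising a single summand by $1$ leaves the number of base-$p$ carries unchanged, and then to read off the relevant digits from the rigid base-$p$ shape of $m$, $m_1$, and $n$. Writing $\nu_p\binom{a}{b}$ as the number of carries when $b$ and $a-b$ are added in base $p$, the first equality compares the carries in forming $k+(m+n-2k-1)$ with those in forming $k+(m+n-2k)$, since $m_1+n-k-1=m+n-k$ and the summand complementary to $k$ passes from $m+n-2k-1$ to $m+n-2k$. Likewise, because $m_1-k-1=m-k$ while the complement $n-k-1$ is unaffected, the second equality compares the carries in $(m-k-1)+(n-k-1)$ with those in $(m-k)+(n-k-1)$. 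In both situations exactly one summand is raised by $1$ and the other is held fixed, so everything hinges on how this unit increment interacts with the carry pattern.

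Next I would record the relevant digits. Since $\tfrac{p^t-1}{2}=\tfrac{p-1}{2}(p^{t-1}+\cdots+p+1)$ has all $t$ of its low digits equal to $\tfrac{p-1}{2}$, while $\tfrac{p^t+1}{2}$ has units digit $\tfrac{p+1}{2}$ and its next $t-1$ digits equal to $\tfrac{p-1}{2}$, and since $ip^t$ and $jp^t$ occupy only positions $\ge t$ with $i,j\le p-1$, the three numbers $m$, $m_1=m+1$, and $n$ have completely explicit low digits. In particular $m$ and $m_1$ agree in every digit except the units digit, where they read $\tfrac{p-1}{2}$ and $\tfrac{p+1}{2}$, and $m+n=(i+j+1)p^t$ is divisible by $p^t$ with $i+j+1\le p$. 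Reduction modulo $p$ then gives the units digits of the summands as explicit functions of $k_0:=k\bmod p$; e.g.\ $(m-k-1)_0\equiv\tfrac{p-3}{2}-k_0$ and $(n-k-1)_0\equiv(m-k-1)_0+1$, the latter reflecting $n-m=(j-i)p^t+1$.

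With the digits in hand, the carry analysis splits into a generic case and one boundary residue of $k_0$. Generically the incremented summand changes only in its units digit, and one checks directly that the units sum does not sit at the carry threshold $p-1$, so no carry is created or destroyed while the higher digits, being identical on both sides, contribute equally; the bookkeeping used in Lemmas~\ref{Lpt1} and~\ref{Lpt2} applies here. The delicate point, which I expect to be the main obstacle, is the single residue $k_0=\tfrac{p-1}{2}$, where the incremented summand already ends in the digit $p-1$ and adding $1$ triggers a carry cascade. The cleanest route is Legendre's digit-sum identity $\nu_p\binom{a+b}{a}=\bigl(s_p(a)+s_p(b)-s_p(a+b)\bigr)/(p-1)$: under it the difference of the two carry counts collapses to the difference in the number of trailing $(p-1)$-digits of the fixed partial sum before and after the increment. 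Thus it suffices to show that $m-k-1$ and the total $m+n-2k-2$ (respectively $m+n-2k-1$ and $m+n-k-1$ for the first equality) carry the same number of trailing $(p-1)$'s, which one verifies digit by digit from the explicit low digits above, the cascade in $m-k$ being exactly matched in $m+n-2k-1$.

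Establishing these two equalities is the whole content of Lemma~\ref{lemi-j+}; the payoff, as flagged in the Interlude, is immediate, since $(m_1,n)=(ip^t+\tfrac{p^t+1}{2},jp^t+\tfrac{p^t+1}{2})$ is precisely the pair covered by Corollary~\ref{Cori+j+}, so Proposition~\ref{Prop} transports back to $(m,n)$ at once.
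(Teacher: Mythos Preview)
Your plan is viable and lands on the right target, but it is considerably more circuitous than the paper's argument, and the intermediate case split you propose is both unnecessary and slightly misstated.

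The paper never touches Kummer or digit expansions here. It uses the elementary multiplicative identities
\[
\binom{N}{k}=\frac{N}{N-k}\binom{N-1}{k},\qquad
\binom{N}{r}=\frac{N}{r}\binom{N-1}{r-1},
\]
applied with $m+n=(i+j+1)p^t$. For the first equality this gives
\[
\nu_p\binom{m_1+n-k-1}{k}-\nu_p\binom{m+n-k-1}{k}
=\nu_p\bigl((i+j+1)p^t-k\bigr)-\nu_p\bigl((i+j+1)p^t-2k\bigr),
\]
which vanishes because $p$ is odd (so $\nu_p(k)=\nu_p(2k)$) together with the range $0\le k\le m-1$. For the second equality, rewriting $(i+j+1)p^t-2k-1=(j-i)p^t+2(m_1-k-1)$ shows at once that $\nu_p\bigl((i+j+1)p^t-2k-1\bigr)=\nu_p(m_1-k-1)$, again using only that $p$ is odd and the given bounds. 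Each equality is a one-line computation.

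Your Legendre reduction is in fact correct and converges to exactly these same two facts: the difference of carry counts equals the difference in the number of trailing $(p-1)$'s, i.e.\ $\nu_p(m+n-k)-\nu_p(m+n-2k)$ for the first equality and $\nu_p(m-k)-\nu_p(m+n-2k-1)$ for the second, and you must show each is zero. So your endpoint coincides with the paper's; the paper simply reaches it without the Kummer detour. Two small corrections to your write-up: once you invoke Legendre the generic/boundary split on $k_0$ is superfluous and should be dropped; and if you did keep it, note that for the \emph{first} equality the incremented summand $m+n-2k-1$ ends in the digit $p-1$ precisely when $k_0=0$, not when $k_0=(p-1)/2$, so your ``single boundary residue'' claim applies only to the second equality as stated.
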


\begin{proof}
Here $m_1+n=(i+j+1)p^t+1$.  Then $m+n-2k-1=m_1+n-2(k+1)=(i+j+1)p^t-2k-1$.  We must show that the number of carries in adding $m_1+n-2(k+1)$ and $k$ equals the number of carries in adding $m_1+n-2k-1$ and $k$.  It suffices to show that
\[\nu_p \left( \binom{(i+j+1)p^t-k-1}{k}\right)=
\nu_p \left( \binom{(i+j+1)p^t-k}{k}\right)\]
for every integer $k \in [0,m-1]$.  Note that
\[\binom{(i+j+1)p^t-k}{k}=\frac{(i+j+1)p^t-k}{(i+j+1)p^t-2k}
\binom{(i+j+1)p^t-k-1}{k}.\]
Since $p$ is an odd prime, the exact power of $p$ that divides $(i+j+1)p^t-k$ equals the exact power of $p$ that divides $(i+j+1)p^t-2k$ and the result follows.

Also, $m-k-1=m_1-k-2$ and $m+n-2k-2=m_1+n-2k-3=(i+j+1)p^t-2k-2$.  We must show
\[\nu_p \left( \binom{(i+j+1)p^t-2k-2}{m_1-k-2} \right)=
\nu_p \left( \binom{(i+j+1)p^t-2k-1}{m_1-k-1}\right). \]
for every integer $k \in [0,m_1-2]$.  Note that
\[ \binom{(i+j+1)p^t-2k-1}{m_1-k-1}=
\frac{(i+j+1)p^t-2k-1}{m_1-k-1} \binom{(i+j+1)p^t-2k-2}{m_1-k-2}.\]
Since $(i+j+1)p^t+1 -2k-2=(j-i) p^t +(2i+1)p^t+1-2k-2=(j-i)p^t+2(m_1-k-1)$ and $p$ is an odd prime, the exact power of $p$ dividing $(i+j+1)p^t-2k-1$ equals the exact power of $p$ dividing $m_1-k-1$ and the result follows.
\end{proof}

\section{The case of $(m,n)=(i p^t+\frac{p^t+1}{2},j p^t+\frac{p^t-1}{2})$}
Here $1 \leq i<j \leq p-1-i$.

\begin{lemma}
Let $n_1=n+1$.   For every integer $k \in [0,m-1]$,
\[
\nu_p \left( \binom{m+n-k-1}{k} \right)=
\nu_p \left( \binom{m+n_1-k-1}{k} \right)\]
and
\[
\nu_p \left( \binom{m+n-2k-2}{m-k-1}\right)=
\nu_p \left( \binom{m+n_1-2k-2}{m-k-1} \right).
\]
\end{lemma}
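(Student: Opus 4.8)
The plan is to follow the pattern of Lemma~\ref{lemi-j+} almost verbatim, exploiting the symmetry between incrementing $m$ (done there) and incrementing $n$ (needed here). The key structural fact is that $m+n=(i+j+1)p^t$ and $m+n_1=(i+j+1)p^t+1$, exactly as in the previous case; consequently the first displayed identity is essentially the same statement already treated there, and only the second identity needs a separate, parallel argument.

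For the first identity I would rewrite $\binom{m+n-k-1}{k}=\binom{(i+j+1)p^t-k-1}{k}$ and $\binom{m+n_1-k-1}{k}=\binom{(i+j+1)p^t-k}{k}$. Using
\[\binom{(i+j+1)p^t-k}{k}=\frac{(i+j+1)p^t-k}{(i+j+1)p^t-2k}\binom{(i+j+1)p^t-k-1}{k},\]
the claim reduces to $\nu_p((i+j+1)p^t-k)=\nu_p((i+j+1)p^t-2k)$, which holds because $p$ is odd: when $\nu_p(k)<t$ both valuations equal $\nu_p(k)$, while for $k=bp^t$ (necessarily $1\le b\le i$ in the range $k\le m-1$) the hypothesis $i+j\le p-1$ keeps the cofactors $i+j+1-b$ and $i+j+1-2b$ in $[1,p-1]$, so both valuations equal $t$.

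For the second identity, note that here the lower entry $m-k-1$ is \emph{unchanged} (we bumped $n$, not $m$), so I would write $\binom{m+n-2k-2}{m-k-1}=\binom{(i+j+1)p^t-2k-2}{m-k-1}$ and $\binom{m+n_1-2k-2}{m-k-1}=\binom{(i+j+1)p^t-2k-1}{m-k-1}$. Computing the complementary entry gives $(i+j+1)p^t-2k-1-(m-k-1)=n-k$, whence
\[\binom{(i+j+1)p^t-2k-1}{m-k-1}=\frac{(i+j+1)p^t-2k-1}{n-k}\binom{(i+j+1)p^t-2k-2}{m-k-1},\]
and the identity reduces to showing $\nu_p((i+j+1)p^t-2k-1)=\nu_p(n-k)$.

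This last valuation equality is the crux, and I expect it to be the only real obstacle. The relation to exploit is $(i+j+1)p^t-2k-1=2(n-k)-(j-i)p^t$, the mirror image (with the sign of the $(j-i)p^t$ term reversed) of the relation $(j-i)p^t+2(m_1-k-1)$ used in Lemma~\ref{lemi-j+}. Since $p$ is odd, when $\nu_p(n-k)<t$ both sides have valuation $\nu_p(n-k)$. The delicate case is $p^t\mid(n-k)$: writing $n-k=ap^t$, one checks from $0\le k\le m-1$ that $j-i\le a\le j$, so $2(n-k)-(j-i)p^t=(2a-(j-i))p^t$ with $2a-(j-i)\in[j-i,\,i+j]\subseteq[1,p-1]$ by the hypothesis $i<j\le p-1-i$; hence both sides again have valuation $t$. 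Carrying out this case split cleanly, and in particular verifying that the bound $i+j\le p-1$ forces the cofactor to be a unit modulo $p$, is the step that must be done with care.
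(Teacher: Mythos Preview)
Your proposal is correct and follows essentially the same route as the paper. The paper is terser: it disposes of the first identity by citing the first equality of Lemma~\ref{lemi-j+}, and for the second it writes the same ratio (with $n_1-k-1$ in place of your equivalent $n-k$) and the same relation $m+n_1-2k-2=(i-j)p^t+2(n_1-k-1)$, then asserts the valuation equality without spelling out the $p^t\mid(n-k)$ subcase that you carefully verify.
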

\begin{proof}
The first equality above follows from the first equality in Lemma~\ref{lemi-j+}.
Here $m+n=(i+j)p^t$.  Since
\[\binom{m+n_1-2k-2}{m-k-1}=\frac{m+n_1-2k-2}{n_1-k-1} \binom{m+n-2k-2}{m-k-1}\]
and
\[\frac{m+n_1-2k-2}{n_1-k-1}=\frac{(i-j)p^t+2(n_1-k-1)}{n_1-k-1},\]
$\nu_p((i-j)p^t+2(n_1-k-1))=\nu_p(n_1-k-1)$ and the second equality holds.
\end{proof}

\section{The case of $(m,n)=(i p^t+\frac{p^t-1}{2},j p^t+\frac{p^t-1}{2})$}
Here $1 \leq i \leq j \leq p-1-i$.

\begin{lemma}\label{lemi-j-}
Let $m_1=m+1$, $n_1=n+1$.  For every integer $k \in [0,m-1]$,
\[
\nu_p \left( \binom{m+n-k-1}{k} \right)=
\nu_p \left( \binom{m_1+n_1-(k+1)-1}{k+1} \right)
\]
and
\[
\nu_p \left( \binom{m+n-2k-2}{m-k-1}\right)=
\nu_p \left( \binom{m_1+n_1-2(k+1)-2}{m_1-(k+1)-1} \right)
.
\]
\end{lemma}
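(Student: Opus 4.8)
The plan is to follow the transport strategy announced in the Interlude. The pair $(m_1,n_1)=(ip^t+\frac{p^t+1}{2},jp^t+\frac{p^t+1}{2})$ is precisely the pair for which Proposition~\ref{Prop} was already established in Corollary~\ref{Cori+j+}. Applying that result with the parameter $k+1$ in place of $k$ (legitimate because, as $k$ runs over $[0,m-1]$, the value $k+1$ runs over $[1,m]\subseteq[0,m_1-1]$) gives $\nu_p\binom{m_1+n_1-(k+1)-1}{k+1}=\nu_p\binom{m_1+n_1-2(k+1)-2}{m_1-(k+1)-1}$; combining this with the two displayed equalities then yields Proposition~\ref{Prop} for $(m,n)$. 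So it remains only to prove those two equalities. Throughout I abbreviate $N=(i+j+1)p^t$ and record the arithmetic $m+n=N-1$ and $m_1+n_1=m+n+2=N+1$.

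The second equality needs no valuation estimate at all, since it is an identity of binomial coefficients. Indeed $m_1+n_1-2(k+1)-2=(m+n+2)-2k-4=m+n-2k-2$ and $m_1-(k+1)-1=(m+1)-k-2=m-k-1$, so the two binomial coefficients $\binom{m+n-2k-2}{m-k-1}$ and $\binom{m_1+n_1-2(k+1)-2}{m_1-(k+1)-1}$ are literally equal, and hence so are their $p$-adic valuations. This is exactly why the correct shift in the Interlude reduction is $k\mapsto k+1$ here: decreasing both $m$ and $n$ by $1$ leaves the second binomial unchanged once $k$ is increased by $1$.

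For the first equality I compute $m+n-k-1=N-k-2$ and $m_1+n_1-(k+1)-1=N-k-1$, so the claim becomes $\nu_p\binom{N-k-2}{k}=\nu_p\binom{N-k-1}{k+1}$. Using the elementary ratio $\binom{N-k-1}{k+1}=\frac{N-k-1}{k+1}\binom{N-k-2}{k}$ (both binomials being genuine, since one checks $N-k-2>k$ in the given range, using $j\ge i$), this reduces to the single numerical assertion
\[\nu_p\bigl(N-(k+1)\bigr)=\nu_p(k+1)\qquad\text{for }k\in[0,m-1].\]
Writing $\ell=k+1\in[1,m]$, this is $\nu_p(N-\ell)=\nu_p(\ell)$, and here lies the only real work. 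The expected main obstacle is handling the boundary cases of this valuation identity rather than the generic case: the generic case follows at once from $\nu_p(N-\ell)=\min(\nu_p(N),\nu_p(\ell))$ whenever $\nu_p(\ell)<\nu_p(N)$, and one first notes $\nu_p(\ell)\le t$ because $\ell\le m<(i+1)p^t\le p^{t+1}$. When $i+j+1=p$ we have $N=p^{t+1}$, so $\nu_p(N)=t+1>\nu_p(\ell)$ and only the generic case occurs. The delicate case is $\nu_p(\ell)=\nu_p(N)=t$, which can happen only when $i+j+1<p$ and $\ell=sp^t$ with $1\le s\le i$; then $N-\ell=(i+j+1-s)p^t$ with $0<i+j+1-s<p$ (using $1\le s\le i$ and $i+j\le p-1$), so $\nu_p(N-\ell)=t=\nu_p(\ell)$ as needed. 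This establishes the first equality and completes the lemma.
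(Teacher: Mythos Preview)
Your proof is correct and follows essentially the same approach as the paper: the second displayed equality is observed to be a literal identity of binomial coefficients, and the first is proved via the ratio $\binom{N-k-1}{k+1}=\frac{N-k-1}{k+1}\binom{N-k-2}{k}$ together with $\nu_p(N-(k+1))=\nu_p(k+1)$ for $N=(i+j+1)p^t$. Your treatment is in fact more careful than the paper's, which simply asserts this last valuation equality without the case analysis (generic versus $\ell=sp^t$) that you supply.
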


\begin{proof}
Now $m+n=m_1+n_1-2=(i+j+1)p^t-1$.  We must show that
\[\nu_p \left( \binom{(i+j+1)p^t-k-2}{k} \right)=
\nu_p \left( \binom{(i+j+1)p^t-k-1)}{k+1} \right)\]
for every integer $k \in [0,m-1]$.  Note that
\[
\binom{(i+j+1)p^t-k-1)}{k+1}=\frac{(i+j+1)p^t-k-1}{k+1}\binom{(i+j+1)p^t-k-2}{k}. 
\]
Since the exact power of $p$ dividing $(i+j+1)p^t-k-1$ equals the exact power of $p$ dividing $k+1$, the result follows.  Since
$m+n-2k-2=m_1+n_1-2(k+1)-2$ and $m-k-1=m_1-(k+1)-1$, the second result is clear.
\end{proof}

\section{The case of $(m,n)=(i p^t+\frac{p^t+1}{2},i p^t+\frac{p^t-1}{2}+p^{t+1})$}

\begin{lemma}\label{Li+i-}
For every integer $k \in [0,m-1]$,
\[
\nu_p \left( \binom{m+n-k-1}{k} \right)=
\nu_p \left( \binom{2m-k-1}{k} \right)\]
and
\[
\nu_p \left( \binom{m+n-2k-2}{m-k-1}\right)=
\nu_p \left( \binom{2m-2k-2}{m-k-1} \right).
\]
\end{lemma}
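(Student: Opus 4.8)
The plan is to prove the two displayed equalities and then feed them into Corollary~\ref{i+i+}: the right-hand binomials are exactly those attached to the pair $(m,m)=(ip^t+\frac{p^t+1}{2},ip^t+\frac{p^t+1}{2})$, for which Corollary~\ref{i+i+} gives $\nu_p(\binom{2m-k-1}{k})=\nu_p(\binom{2m-2k-2}{m-k-1})$, so the two equalities transport Proposition~\ref{Prop} from $(m,m)$ to $(m,n)$. First I would record the arithmetic that drives everything. Here $1\le i\le\frac{p-1}{2}$, so $3\le 2i+1\le p$, and one computes $2m-1=(2i+1)p^t$, $m+n=(2i+1)p^t+p^{t+1}$, and $n-m=p^{t+1}-1$. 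Hence passing from a right-hand quantity to its left-hand partner always adds exactly $p^{t+1}-1$: one has $m+n-k-1=(2m-k-1)+(p^{t+1}-1)$, $m+n-2k-1=(2m-2k-1)+(p^{t+1}-1)$, $n-k-1=(m-k-1)+(p^{t+1}-1)$, and $m+n-2k-2=(2m-2k-2)+(p^{t+1}-1)$.

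The computational engine I would use is Legendre's formula $\nu_p(N!)=(N-s_p(N))/(p-1)$, where $s_p$ is the base-$p$ digit sum, together with the following claim: for every integer $X$ with $1\le X\le p^{t+1}$,
\[ s_p(X+p^{t+1}-1)-s_p(X)=(p-1)\,\nu_p(X). \]
This is immediate from $X+p^{t+1}-1=(X-1)+p^{t+1}$: since $0\le X-1<p^{t+1}$ the digit in position $t+1$ is vacant, so $s_p((X-1)+p^{t+1})=s_p(X-1)+1$, and the standard borrow relation $s_p(X-1)=s_p(X)-1+(p-1)\nu_p(X)$ finishes it. Writing each $\nu_p(\binom{N}{a})$ as $(s_p(a)+s_p(N-a)-s_p(N))/(p-1)$ and subtracting, the difference between the two sides of either asserted equality collapses, via the claim, to a difference of two $p$-adic valuations. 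The hypothesis $1\le X\le p^{t+1}$ holds throughout because each relevant $X$ lies between $1$ and $(2i+1)p^t\le p^{t+1}$ (the degenerate case $k=m-1$ of the second equality is handled directly, both binomials being $1$).

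For the second equality this is clean and immediate: applying the claim with $X=m-k-1$ and with $X=2m-2k-2=2(m-k-1)$, the difference of valuations becomes $\nu_p(m-k-1)-\nu_p(2(m-k-1))$, which vanishes because $p$ is odd. The first equality is the real work. The same bookkeeping reduces it to the auxiliary identity
\[ \nu_p(2m-k-1)=\nu_p(2m-2k-1)\qquad(0\le k\le m-1), \]
which I would prove from $2m-1=(2i+1)p^t$ with $3\le 2i+1\le p$ and $0\le 2k\le 2m-2<p^{t+1}$. Setting $w=2m-1$ and $r=\nu_p(k)\le t$, the generic range $r<\nu_p(w)$ gives $\nu_p(w-k)=r=\nu_p(w-2k)$ at once (using that $p$ is odd, so $\nu_p(2k)=r$); the only other possibility is $r=\nu_p(w)=t$, which forces $2i+1<p$, and then writing $k=\ell p^t$ with $1\le\ell\le i$ reduces the two valuations to those of $2i+1-\ell$ and $2i+1-2\ell$, both integers in $[1,p-1]$ and hence prime to $p$.

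I expect this auxiliary valuation identity to be the main obstacle, everything else being a mechanical application of the digit-sum claim. The delicate points are the borderline subcase $\nu_p(k)=\nu_p(2m-1)$ and keeping the two regimes straight: $2i+1<p$ (where $\nu_p(2m-1)=t$) versus $i=\frac{p-1}{2}$ (where $2m-1=p^{t+1}$ and $\nu_p(2m-1)=t+1$, so the borderline subcase cannot occur).
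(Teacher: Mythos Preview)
Your argument is correct. The digit-sum claim $s_p(X+p^{t+1}-1)-s_p(X)=(p-1)\nu_p(X)$ for $1\le X\le p^{t+1}$ is valid, the range checks on the four $X$'s are accurate (including the separate treatment of $k=m-1$), and your case split for the auxiliary identity $\nu_p(2m-1-k)=\nu_p(2m-1-2k)$ covers everything, the boundary subcase $r=\nu_p(2m-1)=t$ (forcing $2i+1<p$) being handled cleanly.

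The paper takes a different two-step route. It introduces $n_1=n+1$, so that $m+n_1=2m+p^{t+1}$, and first passes from $(m,n)$ to $(m,n_1)$ via the multiplicative identities $\binom{N+1}{k}=\frac{N+1}{N+1-k}\binom{N}{k}$ and $\binom{N+1}{a+1}=\frac{N+1}{a+1}\binom{N}{a}$, reducing each step to a comparison of two single $p$-adic valuations; then it passes from $(m,n_1)$ to $(m,m)$ by Kummer's carry theorem, since adding $p^{t+1}$ to the top creates no new carries. Your approach collapses these two steps into one by working entirely with Legendre's digit-sum formula and the shift $N\mapsto N+p^{t+1}-1$. Interestingly, both routes bottom out in essentially the same auxiliary identity: you prove $\nu_p((2i+1)p^t-k)=\nu_p((2i+1)p^t-2k)$, while the paper needs the $p^{t+1}$-shifted version $\nu_p((2i+1)p^t+p^{t+1}-k)=\nu_p((2i+1)p^t+p^{t+1}-2k)$ (the paper records $m+n$ as $2ip^t+p^{t+1}$, an arithmetic slip for $(2i+1)p^t+p^{t+1}$), and the same case analysis handles both. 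Your treatment has the advantage of making this key valuation identity explicit and proving it, whereas the paper asserts it as ``clear''; the paper's ratio-identity step, on the other hand, avoids your bookkeeping with digit sums. Either way the substance is the same.
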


\begin{proof}
Let $n_1=n+1$.  First we show that for every integer $k \in [0,m-1]$,
\[
\nu_p \left( \binom{m+n-k-1}{k} \right)=
\nu_p \left( \binom{m+n_1-k-1}{k} \right)\]
and
\[
\nu_p \left( \binom{m+n-2k-2}{m-k-1}\right)=
\nu_p \left( \binom{m+n_1-2k-2}{m-k-1} \right).
\]
Here $m+n=2 i p^t+p^{t+1}$.  Now
\[\binom{m+n_1-k-1}{k} =\frac{m+n_1-k-1}{m+n_1-2k-1} \binom{m+n-k-1}{k}\]
and
\[\frac{m+n_1-k-1}{m+n_1-2k-1}=\frac{2 i p^t+p^{t+1}-k}{2i p^t+p^{t+1}-2k}.\]
Clearly $\nu_p(2 i p^t+p^{t+1}-k)=\nu_p(2 i p^t+p^{t+1}-2k)$ and the first equality holds.

Now
\[\binom{m+n_1-2k-2}{m-k-1} =\frac{m+n_1-2k-2}{n_1-k-1} \binom{m+n-2k-2}{m-k-1}\]
and
\[\frac{m+n_1-2k-2}{n_1-k-1} =\frac{-p^{t+1}+2(n_1-k-1)}{n_1-k-1}.\]
Clearly $\nu_p(-p^{t+1}+2(n_1-k-1))=\nu_p(n_1-k-1)$ and the second equality follows.

Finally, since $m+n_1=2m+p^{t+1}$ and there are no carries into the $p^{t+1}$ digit in adding $m+n_1-2k-1$ and $k$ or in adding $n_1-k-1$ and $m-k-1$,
\[\nu_p \left( \binom{m+n_1-k-1}{k} \right)=\nu_p \left( \binom{2m-k-1}{k} \right)\]
and \[\nu_p\left(\binom{m+n_1-2k-2}{m-k-1} \right)=\nu_p \left(\binom{2m-2k-2}{m-k-1}\right),\]
the result follows.

\end{proof}

\begin{corollary}\label{Cori+i-}
Proposition~\ref{Prop} holds for $(m,n)=(i p^t+\frac{p^t+1}{2},i p^t+\frac{p^t-1}{2}+p^{t+1})$.
\end{corollary}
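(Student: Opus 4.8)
The plan is to imitate the proof of Corollary~\ref{Cori+j+}, chaining Lemma~\ref{Li+i-} together with the diagonal case already in hand. The key observation I would make first is that the first coordinate is $m = ip^t + \frac{p^t+1}{2}$, which is exactly the form treated in Corollary~\ref{i+i+}; hence the diagonal pair $(m,m)$ falls within the scope of that corollary, and Proposition~\ref{Prop} is already known to hold for it.

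I would record that Corollary~\ref{i+i+}, applied to $(m,m)$ where $m+n-k-1 = 2m-k-1$ and $m+n-2k-2 = 2m-2k-2$, supplies for every integer $k \in [0,m-1]$ the identity
\[\nu_p\left(\binom{2m-k-1}{k}\right) = \nu_p\left(\binom{2m-2k-2}{m-k-1}\right).\]
Next I would invoke Lemma~\ref{Li+i-}, which identifies each of the two relevant valuations for our off-diagonal pair $(m,n)$ with the corresponding diagonal valuation: it gives $\nu_p(\binom{m+n-k-1}{k}) = \nu_p(\binom{2m-k-1}{k})$ and $\nu_p(\binom{m+n-2k-2}{m-k-1}) = \nu_p(\binom{2m-2k-2}{m-k-1})$.

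Splicing these together yields the chain
\[\nu_p\left(\binom{m+n-k-1}{k}\right) = \nu_p\left(\binom{2m-k-1}{k}\right) = \nu_p\left(\binom{2m-2k-2}{m-k-1}\right) = \nu_p\left(\binom{m+n-2k-2}{m-k-1}\right),\]
which is precisely assertion $(*)$ of Proposition~\ref{Prop} for $(m,n)$, valid on the full range $k \in [0,m-1]$.

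I do not expect any serious obstacle: the carry analysis relating $(m,n)$ to the diagonal pair $(m,m)$ has already been discharged in Lemma~\ref{Li+i-}, and the diagonal case itself was settled in Corollary~\ref{i+i+}. The only point demanding a moment's care is the bookkeeping check that the index $i$ satisfies $1 \leq i \leq \frac{p-1}{2}$, matching exactly the hypothesis under which Corollary~\ref{i+i+} was proved, so that the diagonal identity transfers verbatim with no alteration to the range of $k$.
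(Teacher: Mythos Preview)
Your proof is correct and follows exactly the same approach as the paper: invoke Corollary~\ref{i+i+} for the diagonal pair $(m,m)$ to obtain $\nu_p\bigl(\binom{2m-k-1}{k}\bigr)=\nu_p\bigl(\binom{2m-2k-2}{m-k-1}\bigr)$, then apply Lemma~\ref{Li+i-} to transfer this identity to $(m,n)$. The paper's version is simply more terse, omitting the explicit chain and the bookkeeping remark on the range of $i$.
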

\begin{proof}
Since, by Corollary~\ref{i+i+},
\[\nu_p \left(\binom{2m-k-1}{k} \right)=\nu_p\left( \binom{2m-2k-2}{m-k-1}\right)\]
for every integer $k \in [0,m-1]$, the result follows from Lemma~\ref{Li+i-}.
\end{proof}

\section{The case of $(m,n'+r p^{t+1})$ where $(m,n') \in S'_t$}

Let $m=i p^t+\frac{p^t \pm 1}{2}$ and $n'=j p^t+\frac{p^t \pm 1}{2}$ with $m \leq n'$ ($m-1 \leq n'$?) and $i+j \leq p-1$.

Note that $m+n'-k-1 \leq p^{t+1}$ with equality only occurring when $k=0$, $m=i p^t+\frac{p^t + 1}{2}$,  $n'=j p^t+\frac{p^t + 1}{2}$, and $i+j=p-1$.  Thus in adding $m-k-1$ and $n'-k-1$, there are no carry into the $p^{t+1}$ digit.  Hence the number of carries in adding $m-k-1$ and $n-k-1$ equals the  number of carries in adding $m-k-1$ and $n'-k-1$.  Thus
\[\nu_p \left( \binom{m+n-2k-2}{m-k-1}\right)=\nu_p \left( \binom{m+n'-2k-2}{m-k-1}\right)\]
for every integer $k \in [0,m-1]$.
Clearly, $\binom{m+n-0-1}{0}=1=\binom{m+n'-0-1}{0}$.  Assume that $1 \leq k \leq m-1$.  Then there is no carry into the $p^{t+1}$ digit in adding $m+n'-2k-1$ and $k$.  Hence the number of carries in adding $m+n-2k-1$ and $k$  equals the number of carries in adding $m+n'-2k-1$ and $k$.  Thus
\[\nu_p \left( \binom{m+n-k-1}{k} \right)=
\nu_p \left( \binom{m+n'-k-1}{k} \right)\]
for every integer $k \in [0,m-1]$.
Since
\[\nu_p \left( \binom{m+n'-k-1}{k} \right)=
\nu_p \left( \binom{m+n'-2k-2}{m-k-1}\right),\]
it follows that
\[\nu_p \left( \binom{m+n-k-1}{k} \right)=
\nu_p \left( \binom{m+n-2k-2}{m-k-1}\right)\]
for every integer $k \in [0,m-1]$.

\end{document}